\date{Mars 2020}
\title{Quantitative inverse theory of Gowers uniformity norms}
\author{Thomas F. Bloom}
\address{Centre for Mathematical Sciences\\ Cambridge University\\ UK}
\email{tb634@cam.ac.uk}
\theoremstyle{plain}
\newtheorem{definition}{\definame}[section]
\newtheorem{proposition}[defi]{\propname}
\newtheorem{theorem}[defi]{\theoname}
\newtheorem{question}[defi]{Question}
\newtheorem{lemma}[defi]{\lemmname}
\theoremstyle{remark}
\newcommand{\abs}[1]{\left\lvert #1\right\rvert}
\newcommand{\brac}[1]{\left( #1\right)}
\newcommand{\norm}[1]{\left\lVert #1\right\rVert}
\newcommand{\bbf}{\mathbb{F}}
\newcommand{\bbz}{\mathbb{Z}}
\newcommand{\bbr}{\mathbb{R}}
\newcommand{\bbc}{\mathbb{C}}
\newcommand{\td}{\,\mathrm{d}}
\newcommand*{\bbe}{
  \mathop{
    \mathchoice{\vcenter{\hbox{\larger[4]$\mathbb{E}$}}}
               {\kern0pt\mathbb{E}}
               {\kern0pt\mathbb{E}}
               {\kern0pt\mathbb{E}}
  }\displaylimits
}
\newcommand{\bvec}[1]{\mathbf{#1}}
\begin{document}

\maketitle

\section*{Abstract:}
Gowers uniformity norms are the central objects of higher order Fourier analysis, one of the cornerstones of additive combinatorics, and play an important role in both Gowers' proof of Szemer\'{e}di's theorem and the Green-Tao theorem. The inverse theorem states that if a function has a large uniformity norm, which is a robust combinatorial measure of structure, then it must correlate with a nilsequence, which is a highly structured algebraic object. This was proved in a qualitative sense by Green, Tao, and Ziegler, but with a proof that was incapable of providing reasonable bounds. In 2018 Manners achieved a breakthrough by giving a new proof of the inverse theorem. Not only does this new proof contain a wealth of new insights but it also, for the first time, provides quantitative bounds, that are at worst only doubly exponential. This talk will give a high-level overview of what the inverse theorem says, why it is important, and the new proof of Manners.

\section*{Introduction}
Let $s\geq1$ be a fixed integer (in particular, all implicit constants may depend on $s$). We fix some large prime $N$ and use $\bbz_N$ to denote the cyclic group $\bbz/N\bbz$. If $f:\bbz_N\to\bbc$ and $h\in \bbz_N$ we define the multiplicative derivative $\Delta_hf:\bbz_N\to \bbc$ by $\Delta_h f(x)=f(x)\overline{f(x+h)}$. This definition has a natural extension to vectors $\bvec{h}\in \bbz_N^k$, so that
\[\Delta_{\bvec{h}}f(x) = \Delta_{h_1}\cdots \Delta_{h_k}f.\]
We can now define our central object of study: the Gowers uniformity norm of degree $s$. This is defined for $f:\bbz_N\to\bbc$ by

\[\norm{f}_{U^{s+1}}=\brac{ \bbe_{\bvec{h}\in \bbz_N^{s+1}}\bbe_{x\in \bbz_N}\Delta_{\mathbf{h}}f(x)}^{1/2^{s+1}}.\]

Here we have used the expectation notation, now standard within additive combinatorics, to denote the normalised sum (so that, for example, $\bbe_{x\in H}$ means $\frac{1}{\abs{H}}\sum_{x\in H}$). These norms\footnote{It is not obvious from the definition that they are indeed norms, but this is true for $s\geq 1$. A proof can be found in the original paper of \textcite{Go01}.} were introduced into additive combinatorics by \textcite{Go01} in his analytic proof of Szemer\'{e}di's theorem. They provide a quantitative measure of the additive structure of $f$. For example, the $U^2$ norm measures $f$ along $2$-dimensional additive quadruples of the shape $(x,x+h_1,x+h_2,x+h_1+h_2)$. In general, the $U^{s+1}$ norm measures $f$ along $2^{s+1}$-tuples which are (the projections of) $(s+1)$-dimensional cubes.

These configurations are generic enough that if we can understand how $f$ behaves on them then we can understand $f$ on almost any other kind of linear system. For example, the count of $f$ along arithmetic progressions of length $k$ is controlled by $\norm{f}_{U^{k-1}}$. As a result the uniformity norms play a central role in modern additive combinatorics. They often appear in proofs which use a `structure vs.\ randomness' philosophy. Such proofs go along the following lines:
\begin{enumerate}
\item Find some function $f$ such that behaviour of the objects one wishes to understand is governed by some $\norm{f}_{U^{s+1}}$ (for example, when counting $k$-term arithmetic progressions inside some $A\subset \bbz_N$ one would take $f=1_A-\tfrac{\abs{A}}{N}$ and $s=k-2$);
\item Show that the theorem in question follows if $\norm{f}_{U^{s+1}}$ is \emph{small} -- this is the random aspect of the `structure vs.\ randomness' dichotomy, and usually follows from elementary counting methods;
\item Show that if $\norm{f}_{U^{s+1}}$ is \emph{large} then $f$ has some algebraic structure;
\item Finally conclude the proof by showing that if $f$ has this algebraic structure then the theorem also follows.
\end{enumerate}

Landmark applications of this method are the analytic proof of Szemer\'{e}di's theorem by \textcite{Go01}, the first to deliver reasonable quantitative bounds, and the proof by \textcite{GT08a} that the primes contain arbitrarily long arithmetic progressions. 

The inverse theory of uniformity norms, which is the focus of this article, addresses the third point, and seeks to answer the following question.

\begin{question}[The Inverse Question]
If a $1$-bounded\footnote{A function $f$ is $1$-bounded if $\abs{f(x)}\leq 1$ for all $x$.} function $f:\bbz_N\to\bbc$ has large $U^{s+1}$ norm then what can we deduce about $f$?
\end{question}

We will first explore this question in the simplest case $s=1$. A brief experimentation shows that the function\footnote{We use $e(x)$ to denote $e^{2\pi ix}$.} $x\mapsto e(\alpha x)$, where $\alpha \in\tfrac{1}{N}\bbz$ (so that this does indeed give a well-defined function on $\bbz_N$), satisfies $\norm{f}_{U^2}=1$, the maximum possible. Therefore (linear) characters provide examples of bounded functions with large $U^2$ norm. The inverse theorem for the $U^2$ norm says that the converse is also true, at least in an approximate sense. More precisely, if $\norm{f}_{U^2}\geq \delta$ then $f$ must correlate with a linear character, in that there exists some $\alpha\in \tfrac{1}{N}\bbz$ such that 
\begin{equation}\label{u2inv}
\abs{\bbe_x f(x)\overline{e(\alpha x)}}> c(\delta)
\end{equation}
for some constant $c(\delta)>0$ depending only on $\delta$. The proof of this inverse result is a one-line consequence of orthogonality. We note first that the $U^2$ system of quadruples of the shape $(x,x+h_1,x+h_2,x+h_1+h_2)$ are exactly those quadruples $(x_1,x_2,x_3,x_4)$ such that $x_1+x_2=x_3+x_4$. We may use linear characters and orthogonality to detect this latter linear condition to see that, defining the Fourier transform by $\widehat{f}(a)=\bbe_x f(x)e(-ax/N)$ for $a\in\bbz_N$, we have
\begin{align*}
\norm{f}_{U^2}^4
&= \sum_{a\in \bbz_N}\abs{\widehat{f}(a)}^4\\
&\leq \sup_a \abs{\widehat{f}(a)}^2 \sum_{a\in \bbz_N}\abs{\widehat{f}(a)}^2\\
&= \sup_a \abs{\widehat{f}(a)}^2\bbe_{x\in \bbz_N}\abs{f(x)}^2\\
&\leq \sup_a\abs{\widehat{f}(a)}^2,
\end{align*}
using Parseval's identity and the assumption that $f$ is $1$-bounded. It follows that \eqref{u2inv} holds with the explicit lower bound $\geq \delta^2$.

For higher uniformity norms with $s\geq 2$ this simple argument fails, and it is less clear what functions have large $U^{s+1}$ norm. We note that the uniformity norms are nested, in the sense that
\[\norm{f}_{U^2}\leq \norm{f}_{U^3}\leq \norm{f}_{U^4}\leq \cdots,\]
and so in particular this inverse question becomes more difficult as $s$ increases, since any example of a function with large $U^{s+1}$ norm certainly also has large $U^{s+2}$ norm, but the converse may not hold.

Considering what happens when $s=1$, we observe that the reason exponentials with linear phases have large $U^2$ norm is because the second derivative of a linear function always vanishes (hence $\Delta_{h_1,h_2}f$ is an exponential with phase $0$, and so is identically 1). Generalising this observation shows that $f(x)=e(P(x))$ will have $\norm{f}_{U^{s+1}}=1$ whenever $P$ is a polynomial of degree $\leq s$ (with coefficients in $\tfrac{1}{N}\bbz$ so that it is well-defined on $\bbz_N$).

At this point one may guess that, just as when $s=1$, the approximate converse also holds, and conjecture something like: if $\norm{f}_{U^{s+1}}\geq \delta$ then 
\begin{equation}\label{usinv}
\abs{\bbe_x f(x)\overline{e(P(x))}}> c(\delta)
\end{equation}
for some polynomial $P(x)\in \tfrac{1}{N}\bbz[x]$ of degree $\leq s$ and some constant $c(\delta)>0$ depending only on $\delta$. It turns out this is not quite enough, and the exponentials with polynomial phases do not represent a full set of obstructions.\footnote{The term obstructions here comes from thinking of functions $g$ such that ``$\lvert \bbe_x f(x)\overline{g(x)}\rvert\gg 1$ implies $\norm{f}_{U^{s+1}}\gg 1$'' as `obstructions' to having small uniformity norm.} To see why, observe that we don't require our `phase functions' $P(x)$ to actually vanish after taking $s+1$ derivatives. For the $U^{s+1}$ norm of $e(P(x))$ to be large, it suffices for the derivative of $P(x)$ to be biased towards $0$ (so that the multiplicative derivative of $e(P(x))$ is biased towards 1). Consider, for example, the function on $\bbz_N$ defined by\footnote{As usual in number theory, we use $\{ \cdot\}: \bbr \to [0,1)$ to denote the fractional part operator $x\mapsto x-\lfloor x\rfloor$, where $\lfloor x\rfloor$ is the largest integer $n$ such that $n\leq x$.}
\[x \mapsto \{\alpha x\}\{ \beta x\}\]
for $\alpha,\beta\in \tfrac{1}{N}\bbz$. This is not a true quadratic, since the third derivative is not identically zero. This failure arises because $x\mapsto\{\alpha x\}$ is not truly linear. We note, however, that it is linear a large proportion of the time, since we can write $\{\alpha (x+y)\}=\{\alpha x\}+\{\alpha y\}-\rho_{x,y}$, where $\rho_{x,y}$ is a `carry bit' function that is $1$ if $\{\alpha x\}+\{\alpha y\}\geq 1$ and $0$ otherwise. In particular, with probability $1/2$, $\rho_{x,y}=0$ and hence $\{\alpha x\}$ behaves like a linear function with probability $1/2$. By a similar calculation, the third derivative of a quadratic function like $P(x)=\{\alpha x\}\{\beta x\}$ vanishes a positive proportion of the time, and thus $e(P(x))$ has large $U^3$ norm. Thus we need to also include these generalised bracket polynomials (generated by repeated composition of, not just addition and multiplication, but also the fractional part operator $\{\cdot\}$) as possible obstructions.

The inverse theorem for the Gowers uniformity norms states that this expanded set of obstructions captures all the reasons that a function might have large uniformity norm. That is, if $\norm{f}_{U^{s+1}}\geq \delta$ then there exists some bracket polynomial $P$ of degree $\leq s$ such that \eqref{usinv} holds. Such a statement was conjectured by \textcite{GT10} in their work on linear equations in primes and first proved in a qualitative sense (that is, with no bounds on the function $c(\delta)$) by \textcite{GTZ12}. 

The focus of this article is a recent new proof of the inverse theorem by \textcite{Ma18} which gives, for the first time, a quantitative version of this statement. Our aims are threefold:
\begin{enumerate}
\item To state precisely the inverse theorem as proved in \textcite{Ma18}, defining all the concepts required;
\item To sketch some of the ideas used in the proof, giving a flavour of the kind of arguments used; and
\item To state precisely some of the new definitions and concepts introduced by Manners.
\end{enumerate}
The paper of \textcite{Ma18} itself is over 100 pages long, so we will not be able to even approach a proper proof of the inverse theorem. As a result of this intimidating length, however, some of the beautiful new ideas introduced by Manners may go otherwise unnoticed by those without the time to plumb the technical depths. We hope that this article helps popularise them, so that they can find many other applications. 

\section*{Acknowledgements}

I would like to thank Freddie Manners for many helpful comments and guidance on understanding his work, and Aled Walker for his notes and insights on Manners' proof, which have helped me immensely. I would also like to thank an anonymous referee, Gabriel Conant, Ben Green, and Sarah Peluse for their comments on an earlier draft.

\section{The inverse theorem}
The following quantitative inverse theorem for the Gowers uniformity norms is the main result of \textcite{Ma18}, and the focus of this article.

\begin{theorem}[Manners]\label{thmain}Let $\delta>0$ and $N\geq 2$ be prime. If $f:\bbz_N\to\bbc$ is a $1$-bounded function such that $\norm{f}_{U^{s+1}}\geq \delta$ then there exists $\epsilon>0$ and a $1$-bounded, $N$-periodic, nilsequence $\psi:\bbz\to\bbc$ with degree $s$, dimension $D$, parameter $K$, and complexity $M$, such that
\[\abs{\bbe_x f(x)\overline{\psi(x)}}\geq \epsilon,\]
where the parameters are bounded in terms of $\delta$ by
\[D =O(\delta^{-O(1)})\textrm{ and } 
\epsilon^{-1},K,M\leq \begin{cases} \exp(O(\delta^{-O(1)}))&\textrm{ if }s\leq 3\textrm{ and }\\ \exp(\exp(O(\delta^{-O(1)})))&\textrm{ if }s\geq 4.\end{cases}\]
\end{theorem}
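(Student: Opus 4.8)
The plan is to follow the structure of Manners' proof, which proceeds by induction on the degree $s$. The base case $s=1$ is essentially the $U^2$ inverse theorem established in the introduction, so the real content is the inductive step: assuming a quantitative inverse theorem for the $U^s$ norm, deduce one for the $U^{s+1}$ norm. The starting point is the identity $\norm{f}_{U^{s+1}}^{2^{s+1}} = \bbe_{h\in\bbz_N}\norm{\Delta_h f}_{U^s}^{2^s}$, so that $\norm{f}_{U^{s+1}}\geq\delta$ forces $\norm{\Delta_h f}_{U^s}\geq\delta^{O(1)}$ for a positive-density set of $h$. By the induction hypothesis, for each such $h$ the derivative $\Delta_h f$ correlates with a nilsequence $\psi_h$ of degree $s-1$, and the crux of the argument is to show that the family $(\psi_h)_h$ can be chosen to depend on $h$ in an approximately \emph{polynomial} (additive-quasimorphism) way, and then to \emph{integrate} this family back up into a single nilsequence of degree $s$ that correlates with $f$ itself.

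The key steps, in order, would be: (i) run the above derivative-and-induct step to produce the correlating family $\{\psi_h\}$; (ii) exploit the so-called Cauchy--Schwarz / Gowers--Cauchy--Schwarz manipulations to establish an approximate \emph{cocycle} or \emph{parallelogram} relation among the $\psi_h$ across different values of $h$ --- heuristically, $\psi_{h_1+h_2}$ is close to $\psi_{h_1}\cdot\psi_{h_2}$ up to a lower-order correction; (iii) pass from this approximate/local polynomiality to a genuinely global algebraic object, which is exactly the step where bracket-polynomial phenomena and ``carry bits'' intervene, and where one needs Manners' quantitative toolkit for manipulating nilsequences (bracket polynomial identities, equidistribution on nilmanifolds, and the dimension/complexity bookkeeping); (iv) assemble the integrated phase into a nilsequence of degree $s$ with explicitly controlled dimension $D$, parameter $K$, and complexity $M$, and verify the final correlation bound $\abs{\bbe_x f(x)\overline{\psi(x)}}\geq\epsilon$; (v) track how the quantitative parameters transform at each stage --- each application of Cauchy--Schwarz costs a polynomial loss in $\delta$, but the ``integration'' and the regularisation of the nilmanifold structure are where exponential (and, for $s\geq 4$, doubly exponential) losses enter, which is precisely the source of the bifurcation in the statement.

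The main obstacle is step (iii): converting the local/approximate polynomial dependence of the family $\{\psi_h\}$ on $h$ into a single honest nilsequence. Qualitatively this is the heart of the Green--Tao--Ziegler argument, but there it is done via ergodic-theoretic or highly non-effective compactness methods that destroy all quantitative control; Manners' innovation is to replace these by explicit, quantitative manipulations of bracket polynomials and their associated nilmanifolds. Making this effective requires carefully controlling the ``complexity'' of the algebraic objects produced at each stage --- one must show that the dimension only grows polynomially while accepting that the other parameters may blow up exponentially --- and it is here that the bulk of the hundred-plus pages of \textcite{Ma18} is spent. A secondary obstacle is purely organisational: one must set up the right notion of a nilsequence of prescribed degree, dimension, parameter, and complexity so that these quantities behave well under the operations (multiplication, pullback, taking derivatives, integration) used in the proof; getting these definitions right is itself a substantial part of the contribution and is what makes the final quantitative statement even expressible.
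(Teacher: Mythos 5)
Your proposal reproduces the Green--Tao--Ziegler strategy rather than Manners' proof, and the distinction is not cosmetic: it is precisely the reason Manners obtains quantitative bounds where GTZ could not. You induct on $s$, apply the $U^s$ inverse theorem to each $\Delta_h f$ to obtain degree-$(s-1)$ nilsequences $\psi_h$, and then propose to ``integrate'' the approximately multiplicative family $\{\psi_h\}$ into a single degree-$s$ nilsequence. Step (iii) of your outline --- converting the approximate polynomial dependence of an \emph{ensemble of nilsequences} on $h$ into a single nilsequence of higher degree --- is exactly the step that forces GTZ into non-effective arguments, and any attempt to make it explicit lands far up the Ackermann hierarchy, nowhere near the doubly exponential bounds claimed. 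Asserting that ``Manners' innovation is to replace these by explicit quantitative manipulations of bracket polynomials'' papers over the gap: no such quantification of the GTZ integration step is known, and it is not what Manners does.

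What Manners actually does is avoid the induction on the inverse theorem entirely. Using $\norm{f}_{U^{s+1}}^{2^{s+1}} = \bbe_{\bvec{h}\in\bbz_N^{s-1}}\abs{\bbe_x\Delta_{\bvec{h}}f(x)}^2$, he differences all the way down to the $U^2$ level, where plain Fourier analysis supplies, for many $\bvec{h}\in\bbz_N^{s-2}$, a correlating linear character $\chi_{\bvec{h}}$. Repeated Cauchy--Schwarz then shows that the selector $\bvec{h}\mapsto\chi_{\bvec{h}}$ is an \emph{approximate polynomial of degree $s$}, meaning its $(s+1)$-fold additive derivative vanishes on a positive proportion of cubes. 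The heart of the proof is a self-contained structure theorem for approximate polynomials (Theorem~\ref{approx}): every $\epsilon$-polynomial agrees with a nilpolynomial on a positive proportion of its domain. That theorem is proved via polynomial hierarchies, generalised cocycles, and a ``$1\%$ to $99\%$'' expansion argument; the last of these is the source of the double exponential for $s\geq 4$, not the nilsequence regularisation you cite. The deduction of Theorem~\ref{thmain} from Theorem~\ref{approx} is then comparatively elementary (Cauchy--Schwarz plus a projection step), so none of the machinery you list for stitching together degree-$(s-1)$ nilsequences is needed. You should restructure the argument around the approximate-polynomial inverse theorem, which your proposal does not mention.
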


We have followed Manners in stating the inverse result using nilsequences rather than bracket polynomials. These are qualitatively equivalent, as shown by \textcite{BL07}.\footnote{\textcite{BL07} do not give any quantitative form of this equivalence, which would be needed to make a version of Theorem~\ref{thmain} precise for bracket polynomials. Since nilsequences are more convenient to work with anyway we will not address this, and use bracket polynomials only as motivational examples.} Nilsequences are usually, however, easier to work with. We defer an explicit definition of nilsequences, along with the meaning of degree, dimension, parameter, and complexity, till Section~\ref{nilob}. For now the reader should think of a nilsequence of degree $s$ as a function of the shape $n\mapsto e(p(n))$, where $p(n)$ is a (bracket) polynomial of degree at most $s$.

Since it can also be shown that if $f$ correlates with a nilsequence of degree $s$ then it has large $U^{s+1}$ norm (we sketch a proof of this in Section~\ref{secob}), this gives a complete characterisation of functions with large uniformity norm. A characterisation of this type was first conjectured by \textcite[Conjecture~8.3]{GT10} in their work on linear equations in primes. 

Before we explore Theorem~\ref{thmain} and its proof we briefly summarise other, related, results. 
\begin{enumerate}
\item The case $s=1$ is a trivial argument using Fourier analysis, which we have already seen in the introduction. Observe that this argument gives bounds which are polynomial in $\delta$.
\item The case $s=2$ was first established by \textcite{GT08b}, with bounds that are quasipolynomial in $\delta$. The inverse theory for the $U^3$ norm has a strong connection with Freiman-type inverse sumset problems. For example, it is known that polynomial bounds for the inverse $U^3$ problem are equivalent to the Polynomial Freiman-Ruzsa conjecture (as shown by \textcite{GT10b} and \textcite{Lo12}). 
\item The case for general $s$ was proved in a qualitative sense by \textcite{GTZ12}. This proof gives no explicit bounds at all, and uses the language of non-standard analysis. In theory this proof could be made explicit, but any bounds that could be extracted would be terrible, and far up in the Ackermann hierarchy (in particular, much worse than even a tower of exponentials of height $O(\delta^{-1})$).
\item An independent approach towards a proof of the inverse theorem for general $s$ was initiated by \textcite{Sz12} and further developed in a number of papers by \textcite{CS10,CS19,GMVa,GMVb,GMVc}. This approach is also qualitative, and would deliver similarly terrible bounds. 
\item These inverse results are all `global', in that they find a structured object with which $f$ correlates on the entirety of $\bbz_N$. A weaker local inverse statement, in which one just finds some large subset of $\bbz_N$ on which $f$ has strong correlation with a structured object, was proved by \textcite[Theorem 18.1]{Go01} (with even better bounds, only polynomial in $\delta$). This local statement was sufficient to prove Szemer\'{e}di's theorem, but for most applications of the uniformity norms (such as the \textcite{GT10} work on linear equations in primes) a global inverse statement is required.
\item Similar uniformity norms and their inverse results have been studied in ergodic theory, where they were introduced independently of additive combinatorics and the work of Gowers. The ergodic theory analogue of the inverse result was established by \textcite{BHK05}. This work was the first to recognise the importance of nilsequences as the proper set of obstructions, and shaped the formulation of the inverse conjecture by \textcite{GT10}. Although we will not use ergodic-theoretic language in this article, insights and arguments from ergodic theory have been extremely influential in much of the work on uniformity norms. For an in-depth discussion of the relationship between uniformity norms in additive combinatorics and ergodic theory we refer to \textcite{HK09}. 
\item The discussion so far has been entirely for functions $f:\bbz_N\to \bbc$, but one could define uniformity norms and ask the inverse question with $\bbz_N$ replaced by any finite abelian group. The principal alternative is $\bbf_p^n$, where the inverse theorem was first proved qualitatively by \textcite{TZ10}. Very recently a new proof with good quantitative bounds was given by \textcite{GM17,GM20}. While there are some similarities between the methods used for $\bbz_N$ and $\bbf_p^n$, there are important differences that make it difficult to translate progress in one setting to the other. In particular, the strategy of Manners uses crucially that $\bbz_N$ has no non-trivial subgroups (when $N$ is prime), a fact which fails dramatically for $\bbf_p^n$.
\end{enumerate}

One of the remarkable aspects of Theorem~\ref{thmain} is its quantitative strength. It is an indication of the depth and quality of the ideas introduced by Manners that his proof moves us from no bounds at all to bounds that are, at worst, only doubly exponential in the parameter $\delta$. The presence of the double exponential when $s\geq 4$ is, moreover, largely a technical artefact of one particular recursive argument in the proof. Most of the techniques used are much more efficient, and Manners suspects that a more refined version of this part of the argument should be possible, which would bring the bounds down to exponential in $\delta^{O(1)}$ for all $s$. It is not clear what the true order of magnitude of the bounds should be, and the best lower bounds known are only polynomial in $\delta$.

Our aim is, as stated above, to explain carefully what the inverse theorem says and sketch some of the ideas used by Manners. In limited space there is necessarily a great deal more that must be left unsaid. In particular, we omit any discussion of how the inverse theorem can be applied. The qualitative version has already been put to a variety of uses, and now that Manners has proved an effective quantitative version, its power has only grown. For the reader interested in seeing how inverse results can be applied to number theoretic problems we refer to the work of \textcite{GT10} on linear equations in primes, or the book by \textcite{Tab} on higher order Fourier analysis.

In Section 3 we will define carefully what a nilsequence is, completing the statement of Theorem~\ref{thmain}. Before going into details, however, we give a very rough sketch of how proofs of inverse theorems tend to go. The reader unfamiliar with nilsequences can, for now, read `a nilsequence of degree $s$' as `$e(P(n))$ where $P\in \tfrac{1}{N}\bbz[x]$ is a polynomial of degree $s$'. 

\section{A bird's eye view}
Fix some $\delta >0$ and let $f:\bbz_N\to\bbc$ be a 1-bounded function such that $\norm{f}_{U^{s+1}} \geq \delta$. Our goal is to find some nilsequence $\psi$ of degree $s$ such that $f$ `correlates' with $\psi$, that is, $\abs{\bbe f\overline{\psi}}\gg_\delta 1$.\footnote{Here we use the Vinogradov notation $\gg_\delta 1$ to mean $>c(\delta)$ for some function depending only on $\delta$.} The case $s=1$ has already been proved using Fourier analysis, allowing for an inductive approach.

We first sketch the approach of \textcite{GTZ12}. They first use the identity 
\begin{equation}\label{induc}
\| f\|_{U^{s+1}}^{2^{s+1}} = \bbe_h \| \Delta_h f\|_{U^s}^{2^s}
\end{equation}
to deduce that if $\norm{f}_{U^{s+1}}\geq \delta$ then there are $\gg_\delta N$ many $h$ such that $\norm{\Delta_h f}_{U^s}\gg_\delta 1$, and hence by induction there exist corresponding nilsequences $\psi_h$ of degree $s-1$ which correlate with $\Delta_h f$. By a Cauchy-Schwarz argument similar to one used by \textcite{Go01}, they then show that the function $h \mapsto \psi_h$ is `approximately linear', in that there are $\gg_\delta N^3$ many quadruples $(h_1,h_2,h_3,h_4)$ such that
\begin{equation}\label{applin}h_1+h_2=h_3+h_4\quad\textrm{and}\quad \psi_{h_1}\psi_{h_2}\approx \psi_{h_3}\psi_{h_4}\quad\textrm{(for some notion of $\approx$)}.
\end{equation}
This argument makes crucial use of the cocycle identity (valid for all $f:\bbz_N\to\bbc$ such that $\abs{f(x)}\equiv 1$)
\begin{equation}\label{cocycle}
\Delta_{h+k}f(x)=\Delta_k f(x) \Delta_h f(x+k),
\end{equation}
which is a trivial consequence of the definition of $\Delta$. They then use tools from additive combinatorics to further regularise the structure of the function $h \mapsto \psi_h$, after passing to a smaller set of $h$, so that on these $h$ the map $h\mapsto \psi_h$ is genuinely linear.

After a `symmetrisation' argument, which revolves around the symmetry property
\begin{equation}\label{symmetry}
\Delta_{h}\Delta_kf(x)=\Delta_k \Delta_h f(x),
\end{equation}
one can explicitly write $\psi_h(x)\approx \Delta_h\Psi(x)$, where $\Psi$ is a nilsequence of degree $s$, at least up to terms of degree $s-2$. Since each $\Delta_h f$ correlates with $\psi_h$ and $\psi_h$ agrees with $\Delta_h\Psi$ up to a nilsequence of degree $s-2$, it follows that $\Delta_h (f\overline{\Psi})$ correlates with a degree $s-2$ nilsequence. By the converse to the inverse theorem it follows that $\|\Delta_h(f\overline{\Psi})\|_{U^{s-1}}\gg_\delta 1$. Since this is true for $\gg_\delta N$ many $h$, the recursive definition \eqref{induc} implies that $\| f\overline{\Psi}\|_{U^s}\gg_\delta 1$ and so, by induction, there is a nilsequence $\Psi'$ of degree $s-1$ such that $f\overline{\Psi}$ correlates with $\Psi'$, and hence $f$ correlates with $\Psi\Psi'$, a nilsequence of degree $s$, as required.

Putting all this into practice is highly non-trivial, and requires frequent recourse to technical results on the quantitative equidistribution of polynomial sequences on nilmanifolds developed by \textcite{GT12a}.

Manners takes a different approach, which has more in common with the original paper of \textcite{Go01} (which proved a quantitatively reasonable \emph{local} inverse theorem). Instead of using that for many $h\in \bbz_N$ the derivative $\Delta_h f$ has large $U^s$ norm, then using induction and working with degree $s-1$ nilsequences, Manners differences all the way down to $U^2$, finding many $\bvec{h}\in \bbz_N^{s-2}$ such that $\Delta_{\bvec{h}}f$ has large $U^2$ norm, and then applies Fourier analysis there, before working all the way back up again. 

Applying the $U^2$ inverse theorem to $\Delta_{\bvec{h}}f$ produces a function $\bvec{h}\mapsto \psi_{\bvec{h}}$, say, which finds some linear character with which $\Delta_{\bvec{h}}f$ correlates. Again by repeated use of the Cauchy-Schwarz inequality we may then find some structure within this $\psi$ function. Rather than show that it is approximately a linear function, however, as in \eqref{applin}, we obtain something much weaker, and can only show that it is approximately a polynomial of degree $s$.

One is then faced with the daunting task of understanding the structure of such approximate polynomials for higher degree, rather than just the linear case. This is the principal component of the proof of \textcite{Ma18}: a structural theorem for approximate polynomials of any degree $s\geq 1$. He proves that any such approximate polynomial must resemble a structured algebraic object called a nilpolynomial.

With such an inverse result for approximate polynomials in hand, the inverse result for uniformity norms is (relatively) straightforward to deduce. It is the proof of the inverse result for approximate polynomials that takes up the bulk of the paper of Manners, requiring an argument of extraordinary subtlety and care, especially since we need to keep careful track of all quantitative parameters, and ensure they do not grow out of control. 

We will discuss the proof of the inverse result for approximate polynomials in Sections 5 and 6 of this article, and state it precisely (along with defining what a nilpolynomial is) in Section 4. Before then, however, we return to the inverse theorem itself, and precisely define the concept of a nilsequence.

\section{Nilobjects}\label{nilob}

In this section we will build towards a precise definition of a nilsequence, which are the obstructions to having small uniformity norm that appear in the statement of the inverse theorem. Nilsequences are functions $\psi:\bbz\to \bbc$ that should certainly include the classical (phase) polynomials $x\mapsto e(P(x))$, where $P\in \bbr[x]$, but also need to allow for the possibility of more complicated `bracket' structure, as alluded to in the introduction. 

The reader may wonder why we are discussing nilsequences from $\bbz\to\bbc$, and then obtaining well-defined functions on $\bbz_N$ by insisting on $N$-periodicity, rather than directly defining nilsequences from $\bbz_N\to\bbc$. The latter is possible, using the machinery of Host-Kra cube groups, as described in \textcite[Appendix C]{Ma18}. For this expository article we have adopted a more low-brow approach, which is easier to describe, and simply insist on $N$-periodicity by fiat.

To motivate the construction of nilsequences, we focus on one simple example: the function $n\mapsto e(\alpha n^s)$, which our informal discussion suggests should be a nilsequence of degree $s$. This function can be broken down into three steps, factoring first through $\bbr$ and then the quotient space $\bbr/\bbz$:
\[
\begin{tikzcd}
 n \arrow[r,mapsto] & \alpha n^s \arrow[r,mapsto] & \{\alpha n^s\}\arrow[r,mapsto] & e(\alpha n^s)\\
  \bbz \arrow[r] & \bbr \arrow[r] & \bbr/\bbz \arrow[r] & \bbc.
\end{tikzcd}
\]

This factor construction is how we will define a general nilsequence, but we want to have the flexibility of allowing the intermediate group $\bbr$ to be replaced by some other, possibly non-abelian, group $G$. The first map from $\bbz\to G$ should still be a polynomial of degree $s$ (whatever this means for a general group), and the second should be from this group to some kind of compact quotient space. 

Before we give the full definition of a nilsequence we need to first say exactly what we mean by `a polynomial of degree $s$ from $\bbz\to G$' where $G$ is some (not necessarily abelian) group, and then to say what kind of quotient structure is needed.

\subsection{Polynomial maps}
Intuitively, a polynomial map of degree $s$ is one whose derivatives of order $s+1$ all vanish. This suffices for a definition when, for example, considering maps from $\bbr$ to $\bbr$. We need to generalise the concept to handle maps from arbitrary groups to other arbitrary groups, which means we need to be more careful both about what a `derivative of order $s+1$' means, and, indeed, what it means to `vanish'. The correct generalisation of polynomial maps to arbitrary groups was given by \textcite{Le02}.

To properly define polynomial maps we need to consider not just groups, but groups equipped with an additional structure. A \emph{filtered group} of degree $s$ is a group $G$ equipped with a filtration of subgroups\footnote{Manners calls this a `proper filtration', and allows $G_0\neq G_1$ in a filtration. This has a few technical advantages, but for this exposition we will simplify matters by always assuming $G_0=G_1$.} (closed if we are considering topological groups):
\[G=G_0=G_1\geq G_2\geq \cdots\geq G_s \geq \{e\}=G_{s+1}=G_{s+2}=\cdots\]
such that if $g_i\in G_i$ and $g_j\in G_j$ then $[g_i,g_j]=g_i^{-1}g_j^{-1}g_ig_j\in G_{i+j}$. Observe that a filtered group is, as we have defined it, necessarily nilpotent (of class at most $s$), and note that a filtered group of degree $s$ is also one of any degree $s'\geq s$. If $G$ is abelian then it has a filtration of any degree $s\geq 1$, the standard degree $s$ filtration, where $G_i=G$ for $0\leq i\leq s$, which we denote by $G_{(s)}$.

The most important example is $\bbr_{(s)}$, the standard degree $s$ filtration of the reals. Another illustrative example to bear in mind, and our principal non-abelian example, is the degree $2$ filtration of the Heisenberg group (when we speak of the Heisenberg group it will always be with this filtration) defined by
\[\begin{pmatrix} 1 & \bbr & \bbr \\ 0 & 1 & \bbr \\ 0 & 0 & 1\end{pmatrix}=G_0=G_1\geq G_2=\begin{pmatrix} 1 & 0 & \bbr \\ 0 & 1 & 0 \\ 0 & 0 & 1\end{pmatrix}.\]
We extend the definition of derivative to maps between arbitrary groups by defining $\Delta_hp:H\to G$, for $p:H\to G$ and $h\in H$, by
\[\Delta_h p(x)= p(x)p(hx)^{-1},\]
and when $\bvec{h}\in H^m$ we define $\Delta_{\bvec{h}}p=\Delta_{h_1}\cdots \Delta_{h_m}p$.

\begin{definition}[Polynomial map]
If $H$ and $G$ are both filtered groups then a polynomial map $p:H\to G$ is a continuous map such that for all $x\in H$ and $\bvec{h}\in H^m$, if $h_j\in H_{i_j}$ for $1\leq j\leq m$ then
\[\Delta_{\bvec{h}} p(x)\in G_{i_1+\cdots+i_m}.\]

If $A$ is an abelian group then a polynomial from $H$ to $A$ of degree $s$ is a polynomial map in the above sense from $H$ to $A_{(s)}$, the standard degree $s$ filtration of $A$, when $s\geq 1$, and a constant map when $s=0$. 
\end{definition}

For example, if $G=\bbr_{(s)}$ then this condition is just saying that $\Delta_{h_1,\ldots,h_{s+1}}p(x)=0$ for all $x,h_1,\ldots,h_{s+1}\in H$. This is therefore a generalisation of the standard definition that `a polynomial of degree $s$ is a function whose derivatives of order $s+1$ identically vanish'. Note that between filtered groups we do not need to specify the degree of the polynomial map, since this information is already present in the degree of the filtered groups $H$ and $G$.

For our purposes, the domain $H$ will usually be the filtered group $\bbz_{(1)}$, in which case the definition becomes
\[\Delta_{h_1,\ldots,h_m}p(x)\in G_m\]
for all $x,h_1,\ldots,h_m\in \bbz$. For example, polynomial maps from $\bbz_{(1)}$ to $\bbr_{(s)}$ are exactly those polynomials in $\bbz[x]$ of degree at most $s$, and polynomial maps from $\bbz_{(1)}$ to the Heisenberg group are those of the form
\[n\mapsto \begin{pmatrix} 1 & c_1n+c_2 & c_3n^2+c_4n+c_5 \\ 0 & 1 & c_6n+c_7 \\ 0 & 0 & 1\end{pmatrix}\]
where $c_1,\ldots,c_7\in \bbr$ are any constants. It can be shown that, with the definition we have given, the only polynomial maps from a \emph{finite} group to a torsion-free abelian group are constant maps. This is part of the reason that we need to define nilsequences with domain $\bbz$, rather than $\bbz_N$. As mentioned above, there is a more abstract approach that allows for a non-trivial notion of non-constant polynomial maps from $\bbz_N$ to a nilmanifold (which we will define in the next section), which is that used by Manners, but we will not pursue this here. 

The set of polynomial maps as defined here is, just like the set of classical polynomials, closed under pointwise multiplication and composition. This is not at all obvious from the definition! This remarkable fact was proved by \textcite{Le02}. (In fact the set of polynomial maps can itself  be given the structure of a filtered group.)

\subsection{Nilmanifolds}

Having defined what we mean by a polynomial map from $\bbz$ to $G$, where $G$ is a filtered group of degree $s$, we now need to say what kind of quotient structure is needed for a nilsequence. We would like to have structure sufficient to capture the prototypical example of a nilsequence discussed above, which factors through $\bbz\to \bbr\to \bbr/\bbz$. In general we will replace $\bbr$ by some other $\bbr^d$, given the structure of a (filtered) Lie group, such as the normal additive group structure on $\bbr^d$, or the Heisenberg group. We would then like to quotient out by `the integer part' of $\bbr^d$, in a way which is compatible with the Lie group structure.  This leads us to the following definition of a nilmanifold.

\begin{definition}[Nilmanifold]
Let $s\geq1$ be an integer, and $d_1,\ldots,d_s,M\geq 0$ be any integers. A nilmanifold $G/\Gamma$ of degree $s$, dimension $(d_1,\ldots,d_s)$, and complexity $M$, is the quotient space of a pair $(G,\Gamma)$ where
\begin{itemize}
\item $G$ is a simply connected filtered Lie group of degree $s$, where the filtration subgroup $G_i$ is a real manifold with dimension $d_i+\cdots+d_s$, with `generators' $\gamma_{ij}$ for $1\leq i\leq s$ and $1\leq j\leq d_i$ (i.e. homomorphisms $\gamma_{ij}:\bbr\to G$ where we write $\gamma_{ij}^a=\gamma_{ij}(a)$), 
\item every $g\in G$ has a unique representation as
\[\prod_{1\leq i\leq s}\prod_{1\leq j\leq d_i}\gamma_{ij}^{a_{ij}}\]
for some `coordinates' $a_{ij}\in \bbr$, and the associated bijection $\sigma:G \to \bbr^{d_1+\cdots +d_s}$ is a homeomorphism,
\item the filtration subgroup $G_r$ consists of those $g$ such that $a_{ij}=0$ for all $i<r$ and $1\leq j\leq d_i$,
\item for all $i,j,k,l$, the coordinates of the commutator $[\gamma_{ij},\gamma_{kl}]$ are integers bounded by $M$, and 
\item $\Gamma\leq G$ is the subgroup consisting of all those elements with integral coordinates.
\end{itemize}
We also write $D=d_1+\cdots+d_s$ for the dimension of $G/\Gamma$.
\end{definition}

We caution that $G/\Gamma$ is not necessarily a group, but it is a compact space which has $[0,1)^D$ as a fundamental domain. Heuristically, one should think of a nilmanifold of dimension $D$ as the cube $[0,1)^D$ acted upon by a $D$-dimensional Lie group (although, again, we stress that it is not necessarily actually a group itself). The complexity parameter $M$ allows us to swap coordinates in a representation of an element of $G$ at a `cost' of at most $M$. It offers some control of precisely how non-abelian our nilmanifold is. Furthermore, the coordinate homeomorphism $\sigma: G\to \bbr^D$ allows us to define a metric on $G/\Gamma$ by
\[d(x_1\Gamma,x_2\Gamma) = \inf_{\gamma_1,\gamma_2\in \Gamma}\norm{\sigma(x_1\gamma_1)-\sigma(x_2\gamma_2)}_1,\]
and hence we can speak of Lipschitz functions on $G/\Gamma$.

 This is the definition that Manners uses, which is quite explicit. The usual, more compact, definition, as used by \textcite{GTZ12} for example, is that a nilmanifold is a quotient $G/\Gamma$ where $G$ is any connected, simply connected, nilpotent Lie group, and $\Gamma$ is a discrete subgroup such that $G/\Gamma$ is compact. This can be shown to be equivalent to our explicit definition using the lower central series filtration and the machinery of Mal'cev coordinates, but we prefer to be explicit from the outset. 

The simplest abelian example is the nilmanifold of degree $s$, dimension $(0,\ldots,0,1)$, and complexity $0$ given by $G=\mathbb{R}_{(s)}$ and $\Gamma=\bbz$. A slightly less trivial abelian example is the nilmanifold of degree $s$, dimension $(1,\ldots,1)$, and complexity $0$, where $G=\bbr^s$ is given a filtration of degree $s$ by
\[G_1=\bbr^s\geq \bbr^{s-1}\geq \cdots \geq \bbr \geq \{0\}\]
and $\Gamma=\bbz^s$. The simplest non-abelian example is the Heisenberg filtered group discussed above, which (quotiented out by the obvious integral subgroup) is a nilmanifold of degree $2$, dimension $(2,1)$, and complexity $1$.

\subsection{Nilsequences}
Having defined a general polynomial map and an appropriate quotient object to factor through we can now give the definition of a nilsequence. 
\begin{definition}[Nilsequence]
A nilsequence $\psi:\bbz\to\bbc$ of degree $s$, dimension $D$, complexity $M$, and parameter $K$, is a function that factors as
\[
\begin{tikzcd}
  \bbz \arrow[r,"p"] & G \arrow[r, "\pi"] & G/\Gamma \arrow[r, "F"] & \bbc
\end{tikzcd}
\]
where
\begin{enumerate}
\item $G/\Gamma$ is a nilmanifold of degree $s$, dimension $D$, and complexity $M$,
\item $p:\bbz \to G$ is a polynomial map;
\item $\pi:G\to G/\Gamma$ is the projection map; and
\item $F:G/\Gamma \to \bbc$ is Lipschitz with constant $K$.
\end{enumerate}
We say that $\psi$ is $N$-periodic if $\pi\circ p: \bbz\to G/\Gamma$ is.
\end{definition}

For example, if we take the abelian nilmanifold $G=\bbr^s$ and $\Gamma=\bbz^s$ as above, then $p:\bbz\to \bbr^s$ will have the form
\[p(n) = (p_1(n),\ldots,p_s(n))\]
where $p_j\in \bbr[x]$ is a polynomial of degree $j$.  If we take, for example, the Lipschitz function from $(\bbr/\bbz)^s$ to $\bbc$ defined by $(\alpha_1,\ldots,\alpha_s)\mapsto e(\alpha_1+\cdots+\alpha_s)$, then we see that any function of the form $n\mapsto e(P(n))$, where $P\in \bbr[x]$ is a polynomial of degree $s$, is indeed a nilsequence of degree $s$ as expected.

One would also expect that the more complicated phase polynomials, where we allow the bracket operation also, are nilsequences. There is an unfortunate technical hitch, however, which means they are not precisely nilsequences as we have defined them. Consider, for example, the function $n\mapsto e(-\alpha n\lfloor \beta n\rfloor)$, which we would expect to be a nilsequence of degree $2$. One would like to say it is, using the Heisenberg nilmanifold. Indeed, one can check that the map
\[n\mapsto \begin{pmatrix}1&\alpha n& 0\\ 0&1& \beta n \\ 0&0&1\end{pmatrix} \]
is indeed a polynomial map from $\bbz$ to the Heisenberg group, and the projection map $\pi: G\to G/\Gamma$ for the Heisenberg nilmanifold has the form
\[\begin{pmatrix}1& x& z\\ 0&1& y \\ 0&0&1\end{pmatrix}\mapsto \begin{pmatrix}1& \{x\}& \{z-x\lfloor y\rfloor\}\\ 0&1& \{y\} \\ 0&0&1\end{pmatrix}\]
(because with this choice there is a unique $\gamma\in \Gamma=\begin{psmallmatrix}1&\bbz&\bbz\\ 0&1&\bbz\\0&0&1\end{psmallmatrix}$ such that $g=\pi(g)\gamma$), so the first two steps of the definition work as expected. The problem is that, identifying $G/\Gamma$ with the fundamental domain $[0,1)^3$, the map $F:G/\Gamma\to\bbc$ defined by $F(a,b,c)=e(c)$ is not Lipschitz -- indeed, it is not even continuous! This is because as $y$ varies from $0.99$ to $1.01$ the floor function $\lfloor y\rfloor$ has a discontinuous jump, and hence the $c$ coordinate of 
\[\begin{pmatrix} 1&x&z\\ 0&1&y\\ 0&0&1\end{pmatrix}\]
in the fundamental domain, which is $\{ z-x\lfloor y\rfloor\}$, will jump discontinuously (assuming $x\neq 0$). It follows that $F$ is not a continuous function. In their work on the $U^4$ norm \textcite{GTZ11} allow for $F$ to be discontinuous, so that these bracket phase polynomials are genuine nilsequences, but the ensuing analytic complications are hard to deal with for $s\geq 4$. We therefore use the definition above, where the function $F$ is analytically quite nice, the disadvantage being that the `non-abelian' nilsequences do not quite look how one would expect if one is guided by bracket polynomials as the motivating examples. The reader should view this as a purely technical issue, however, and would not lose any intuition in continuing to view nilsequences as exponentials of bracket polynomials for all intents and purposes.

Finally, we remark that many of the analytic arguments when dealing with nilsequences become much simpler if we insist that $F$ is a smooth function (with control on the associated Sobolev norms), rather than just Lipschitz. This alternative definition will hopefully be adopted in future work on the subject, but almost all of the literature (including the work of Manners) works with $F$ as Lipschitz, and so we have followed convention.

\subsection{Nilsequences as obstructions}\label{secob}

Finally, we sketch a proof that nilsequences are indeed obstructions to having small uniformity norm, which is the (easier) converse to the inverse theorem. This is a trivial repeated differencing procedure when the nilsequence has the simple form $n\mapsto e(P(n))$ for $P(x)\in\bbr[x]$ a genuine polynomial, but the more general case requires some work, and was first proved by \textcite{GT08b}. We sketch an alternative proof given by \textcite[Appendix G]{GTZ11}.

\begin{theorem}[Green-Tao-Ziegler]
Let $\delta>0$. If $f:\bbz_N\to\bbc$ is a $1$-bounded function and there exists a $1$-bounded, $N$-periodic, nilsequence $\psi:\bbz\to\bbc$ of degree $s$, complexity $M$, parameter $K$, and dimension $D$ such that
\[\abs{\bbe_{x} f(x)\overline{\psi(x)}}\geq \delta\]
then there exists $\epsilon>0$, depending only on $(s,\delta,M,K,D)$, such that
\[\norm{f}_{U^{s+1}}>\epsilon.\]
The dependence of $\epsilon$, once we have fixed $s$, is polynomial in all the other parameters.
\end{theorem}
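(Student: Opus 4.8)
The plan is to reduce the general (possibly non-abelian, bracket-type) case to the trivial polynomial-phase case by exploiting the rigidity of the Lipschitz data. Write $\psi = F \circ \pi \circ p$ as in the definition, and recall the recursive identity $\|f\|_{U^{s+1}}^{2^{s+1}} = \bbe_h \|\Delta_h f\|_{U^s}^{2^s}$. First I would establish a \emph{linearisation-in-the-derivative} statement: for the map $p:\bbz\to G$, the derivative $\Delta_h p$, viewed as a map $\bbz\to G$ for fixed $h$, is a polynomial map into a \emph{lower-degree} filtered group. Concretely, the filtered-group definition of polynomial map gives that $\Delta_h p$ is polynomial with respect to the shifted filtration $G_{\geq 2}$ (since differencing in the degree-$1$ variable $h$ costs one degree), so one genuinely gains a degree each time one differences. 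This is where the structure of the nilmanifold enters: after differencing, the relevant data lives on $G_2/\Gamma_2$, a nilmanifold of degree $s-1$, and the induced Lipschitz function on it has Lipschitz constant controlled polynomially in $K$ and $M$ (the complexity bound $M$ governs the cost of rewriting coordinates after one multiplies two elements, which is exactly what $\Delta_h p(x) = p(x)p(hx)^{-1}$ requires).

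The key technical step — and the one I expect to be the main obstacle — is handling the \emph{discontinuity issue} flagged in the text for bracket-type nilsequences: after projecting to a fundamental domain, multiplication is only piecewise-Lipschitz, so $\Delta_h \psi$ is not literally of the form (Lipschitz)$\circ\pi\circ$(polynomial) on all of $\bbz_N$. The fix, following \textcite[Appendix G]{GTZ11}, is to absorb the bad set: one shows that the set of $x$ (or of pairs $(x,h)$) where a coordinate-swap crosses a lattice boundary has density $O_{s,D}(\eta)$ for a Lipschitz approximation at scale $\eta$, so one replaces $F$ by a genuinely Lipschitz function $\tilde F$ agreeing with $F$ outside an $\eta$-neighbourhood of the boundary, at the cost of an error $O(\eta K)$ in the correlation and a worsening of the Lipschitz constant to $O(K/\eta)$. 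Choosing $\eta$ polynomially small in $\delta$ keeps everything under control. This is the step that forces all the "analytically nice" hypotheses on $F$ and is the reason the clean statement is possible at all.

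With those two ingredients in place the argument is a clean induction on $s$. The base case $s=1$: if $\psi$ is a degree-$1$ nilsequence with $|\bbe f\overline\psi|\geq\delta$, then expanding $F$ in a (finite, $M,K,D$-controlled) Fourier series on the torus fundamental domain reduces to the case $\psi(n)=e(\alpha n + \beta)$, and then the standard $U^2$ computation from the introduction — Parseval plus $1$-boundedness — gives $\|f\|_{U^2}\geq \delta^{O(1)}$ with polynomial dependence. Inductive step: from $|\bbe_x f(x)\overline{\psi(x)}|\geq\delta$ deduce, by a standard van der Corput / Cauchy–Schwarz in the $h$-variable, that $|\bbe_x \Delta_h f(x)\overline{\Delta_h\psi(x)}| \gg_\delta 1$ for $\gg_\delta N$ values of $h$; for each such $h$, the analysis above exhibits $\Delta_h\psi$ (after the Lipschitz-approximation fix) as a nilsequence of degree $s-1$ with parameters polynomially controlled in $(\delta,M,K,D)$, so the inductive hypothesis yields $\|\Delta_h f\|_{U^s}\gg_\delta 1$ for $\gg_\delta N$ values of $h$; finally the recursive identity $\|f\|_{U^{s+1}}^{2^{s+1}} = \bbe_h\|\Delta_h f\|_{U^s}^{2^s}$ converts this into $\|f\|_{U^{s+1}}\gg_\delta 1$. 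Since each stage multiplies the parameters by a bounded power and $s$ is fixed, the final dependence of $\epsilon$ on $(\delta,M,K,D)$ is polynomial, as claimed.
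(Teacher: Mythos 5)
There is a genuine gap in the inductive step: you assert that $\Delta_h\psi$ is, after a Lipschitz repair, a nilsequence of degree $s-1$, but this is false for a general Lipschitz $F$, and the missing ingredient is precisely the \emph{vertical character decomposition} that the paper's proof uses. Concretely, take $s=2$, $G=\bbr_{(2)}$, $p(n)=\alpha n^2$, and $F$ an arbitrary Lipschitz function on $\bbr/\bbz$. Then
\[\Delta_h\psi(x)=F(\{\alpha x^2\})\,\overline{F(\{\alpha x^2+2\alpha hx+\alpha h^2\})},\]
which still depends on the quadratic quantity $\alpha x^2$ and is therefore not a degree-$1$ nilsequence in $x$ unless $F$ is a character $e(ky)$, in which case the quadratic part cancels and only the linear increment $2\alpha hx+\alpha h^2$ survives. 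In general the pair $(p(x)\Gamma,p(x+h)\Gamma)$ lives on the product nilmanifold $(G\times G)/(\Gamma\times\Gamma)$, which has degree $s$, not $s-1$; the degree only drops after one quotients by the diagonal action of $G_s$, and $F(y)\overline{F(y')}$ descends to that quotient only when $F$ has a vertical character, i.e.\ $F(g_sx)=\chi(g_s)F(x)$. This is why the paper first expands $F=\sum_\chi F_\chi$ along the compact abelian group $G_s/(\Gamma\cap G_s)$, pigeonholes onto a single $\chi$, and only then forms $\tilde F_h(x)=F_\chi(p(x)\Gamma)\overline{F_\chi(p(x+h)\Gamma)}$. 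Your proposal performs a Fourier expansion only in the base case; without it in the inductive step the induction does not close.

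Your supporting claim about $\Delta_hp$ is also not quite what you need: from the definition of a polynomial map one gets $\Delta_{h,h_1,\dots,h_m}p(x)\in G_{m+1}$, so the \emph{further} derivatives of $\Delta_hp$ gain a degree, but $\Delta_hp(x)=p(x)p(x+h)^{-1}$ itself need not lie in $G_2$ (compute it for the Heisenberg example in Section~3: the off-diagonal entries $-\alpha h$, $-\beta h$ survive). More importantly, $\Delta_h\psi$ is not a function of $\Delta_hp(x)$ alone but of the pair $(p(x),p(x+h))$, so no statement about $\Delta_hp$ by itself can exhibit $\Delta_h\psi$ as a lower-degree nilsequence. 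Finally, the Lipschitz-repair paragraph addresses a non-issue for this theorem: in the definition adopted here $F$ is already Lipschitz on $G/\Gamma$ with the quotient metric, so no regularisation at scale $\eta$ is needed; the discontinuity discussion in Section~3 concerns why bracket phases fail to be literal nilsequences, not an obstacle in this proof. The rest of your outline (induction on $s$, Cauchy--Schwarz in $h$, the recursive identity $\|f\|_{U^{s+1}}^{2^{s+1}}=\bbe_h\|\Delta_hf\|_{U^s}^{2^s}$) matches the paper's argument.
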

\begin{proof}[Sketch proof]
We use induction on $s$. The case $s=1$ is simple Fourier analysis coupled with the fact that all degree 1 nilsequences of bounded complexity, parameter, and dimension can be approximated by a short trigonometric sum. 

For general $s\geq 2$, suppose the nilsequence factors as $\psi=F\circ \pi\circ p$. Since $G_s/(\Gamma\cap G_s)$ is a compact abelian group we can use Fourier analysis to expand $F$ as the sum of $F_{\chi}$ as $\chi$ ranges over the dual group of $G_s/(\Gamma\cap G_s)$, where 
\[F_{\chi}(x)= \int_{G_s/(\Gamma\cap G_s)} F(xg_s)\overline{\chi(g_s)} \td g_s.\]
Furthermore, each $F_\chi$ has a vertical character, which means that for all $g_s\in G_s/(\Gamma\cap G_s)$ and all $x\in G/\Gamma$ we have $F_\chi(g_sx)=\chi(g_s)F_\chi(x)$. After proving suitable decay properties of this Fourier series, we may find a single character $\chi$ such that
\begin{equation}\label{aux1}
\abs{\bbe_x f(x)\overline{F_\chi\circ \pi\circ p(x)}}\gg_\delta 1.
\end{equation}
We now consider the function $\tilde{F}_h:\bbz_N\to\bbc$, defined for any $h\in \bbz_N$ by
\[\tilde{F}_h(x)= F_\chi(p(x)\Gamma)\overline{F_\chi(p(x+h)\Gamma)}.\]
Squaring \eqref{aux1} and expanding out the brackets, then using the pigeonhole principle, we deduce that there exist $\gg_\delta N$ many $h\in \bbz_N$ such that
\[\abs{\bbe_x \Delta_h f(x)\tilde{F}_h(x)}\gg_\delta 1.\]
One can show (see \textcite[Appendix G]{GTZ11} for details) that $\tilde{F}_h$ is, for any fixed $h$, itself a nilsequence of degree $s-1$ (this is not entirely straightforward, and some care must be taken to ensure that the complexity does not increase by too much). The inductive hypothesis then implies that $\norm{\Delta_hf}_{U^s}\gg_\delta1$ for $\gg_\delta N$ many $h$, and thus
\[\norm{f}_{U^{s+1}}^{2^{s+1}}=\bbe_h \norm{\Delta_h f}_{U^s}^{2^s}\gg_\delta 1\]
as required.
\end{proof}

With all the terms used in the statement of Theorem~\ref{thmain} defined, we now turn to sketching the proof of Manners.

\section{An inverse theorem for approximate polynomials}
The core of Manners' proof is another inverse result, but not for uniformity norms, rather for the more combinatorial notion of an approximate polynomial. We first introduce the additive discrete derivative. Let $H$ be an abelian group, then for any $f:H\to \bbr$ and $h\in H$ we define $\partial f:H\to \bbr$ by
\[\partial_h f(x)= f(x)-f(x+h),\]
again extended to multiple derivatives $\partial_{\bvec{h}}$ in the natural fashion.\footnote{This is just the derivative operator $\Delta$ applied to functions taking values in the additive group of the reals, but we find it helpful to have a reminder of whether we're thinking additively or multiplicatively.} Recall that a polynomial of degree $s$ from $H$ to $\bbr$ is one whose derivatives of order $s+1$ are all identically zero -- that is, a function $f$ such that $\partial_{\bvec{h}}f(x)=0$ for all $(x,\bvec{h})\in H^{s+2}$. To obtain an approximate polynomial we weaken this to just holding for some small proportion of $H^{s+2}$. 
\begin{definition}[Approximate polynomial]
Let $H$ be a finite abelian group. Let $\epsilon>0$ and $X\subset H$. A function $f:X\to \bbr$ is an $\epsilon$-polynomial of degree $s$ on $X$ if
\[\#\{ (x,\mathbf{h})\in H^{s+2}: \partial_{\mathbf{h}}f(x)=0\}\geq \epsilon \abs{H}^{s+2},\]
where the set ranges over those $(x,\bvec{h})$ such that $\partial_{\bvec{h}}f(x)$ is defined (i.e. $x+\omega\cdot \bvec{h}\in X$ for all $\omega\in\{0,1\}^{s+1}$). In particular, it follows that $\abs{X}\gg_s \epsilon \abs{H}$. 
\end{definition}

It is an instructive calculation to verify that all of the following are examples of approximate polynomials (for some $\epsilon>0$ bounded away from zero, that is dependent on the precise construction):
\begin{enumerate}
\item bracket polynomials, such as $x\mapsto \{ mx/N\}$ for some fixed $m\in\bbz$;
\item any function taking any $K$ arbitrary values (where $K$ is independent of $H$, and $\epsilon$ can depend on $K$);
\item a function that agrees with a bracket polynomial on a positive density set and is otherwise random; and
\item a function which switches between a few different bracket polynomials as we move across some arbitrary partition of $\bbz_N$.
\end{enumerate}

The inverse result mentioned above, and the keystone of Manners' work, is the fact that examples of these kinds are, essentially, the only types of approximate polynomials. Before stating it, we will define the concept of nilpolynomials, which are to bracket polynomials as nilsequences are to phase bracket polynomials. Again, this is because they are technically far more convenient to work with, although they are essentially equivalent, and the reader can think of them as bracket polynomials if they prefer. 

\subsection{Nilpolynomials}

We need to work in a slightly more general context than just cyclic groups, so we give the definition for $\bbz^d$ for any $d\geq 1$.
\begin{definition}[Nilpolynomial]
A nilpolynomial $f:\bbz^d\to\bbr$ of degree $s$, dimension $D$, and complexity $M$, is a function that factors as
\[
\begin{tikzcd}
  \bbz^d \arrow[r,"\rho"] & G \arrow[r, "F"] & \bbr
\end{tikzcd}
\]
where
\begin{enumerate}
\item there is $\Gamma\leq G$ such that $G/\Gamma$ is a nilmanifold of degree $s$, dimension $D$, and complexity $M$, 
\item there is some polynomial map $p:\bbz^d\to G$ such that $\pi\circ p=\pi\circ \rho$, where $\pi$ is the usual projection map $\pi:G\to G/\Gamma$,
\item $\rho$ maps into the ball of radius $M$ around the identity, and
\item $F:G\to \bbr$ is a polynomial map of degree $s$. 
\end{enumerate}
We say that $f$ is $N$-periodic if the function $\rho$ is (so that the nilpolynomial induces a function $f:\bbz_N^d\to \bbr$).
\end{definition}
The point is that we are not just limiting ourselves to compositions $F\circ p$, where $p:\bbz^d\to G$ and $F:G\to \bbr$ are polynomials, but we also allow the flexibility of combining $p$ with some arbitrary (but bounded) `lift' from $G/\Gamma$ to $G$. Roughly speaking, therefore, nilpolynomials are `polynomials factored through nilmanifolds', although we stress that they are \emph{not} (necessarily) polynomial maps from $\bbz^d$ to $\bbr$ themselves. If one develops more machinery, as Manners does, one can define a notion of polynomial maps from $\bbz_N^d$ to nilmanifolds directly, but for our purposes, the definition of nilpolynomial that results is equivalent to what we have presented here.

For example, if $G$ is the abelian nilmanifold $\bbr^s$, taking $\rho(n)=(0,\ldots,0,\{ p(n)\})$ and $F:\bbr^s\to\bbr$ as the projection onto the last coordinate, where $p(n)$ is a polynomial of degree $s$, exhibits $n\mapsto \{p(n)\}$ as a nilpolynomial of degree $s$, as expected. It is $N$-periodic if $p(x)\in \tfrac{1}{N}\bbz[x]$. Furthermore, the map $n\mapsto \{p(n)\}+r(n)$ is also a nilpolynomial, now of complexity $M$, for any arbitrary function $r:\bbz\to\bbz$ taking values in $(-M,M)$ (and is still $N$-periodic if $r$ is). More general bracket polynomials, such as $n\mapsto \{\alpha n\}\{\beta n\}$, are also nilpolynomials, and can be constructed through Heisenberg-type nilmanifolds as in the previous section. 

The point of the parameter $M$, and in allowing $\rho$ to take values outside the fundamental domain of $G/\Gamma$, is to give greater flexibility to the function $\rho$ so that it does not itself need to exhibit strict polynomial behaviour, even though $\pi\circ \rho$ does. This means that we can construct functions that take on a bounded number of arbitrary values as genuine nilpolynomials in their own right.

For example, suppose that $A_1\sqcup\cdots \sqcup A_r$ is a partition of $\bbz$, and $f$ takes the value $c_i\in \bbr$ on $A_i$. Let $G=\bbr^r$ with the standard degree 1 filtration, and let
\[\rho(x)= (1_{A_1}(x),\ldots,1_{A_r}(x)).\]
If we choose $F:\bbr^r\to \bbr$ to be the affine projection 
\[(x_1,\ldots,x_r)\mapsto c_1x_1+\cdots+c_rx_r,\]
then $F\circ \rho=f$ as required. This exhibits $f$ as a nilpolynomial of degree 1, dimension $r$, and complexity $1$. 

\subsection{The inverse result for approximate polynomials}
One can show that any nilpolynomial is an approximate polynomial. Manners proves the following strong inverse theorem.

\begin{theorem}[Every approximate polynomial is approximately a nilpolynomial]\label{approx}
If $X\subset \bbz_N^d$ and $f:X\to\bbr$ is an $\epsilon$-polynomial of degree $s$ then there is an $N$-periodic nilpolynomial $g:\bbz^d\to\bbr$ of degree $s$, dimension $O(\epsilon^{-O(1)})$, and complexity $M$ such that
\[f(x) = g(x)\quad\textrm{ for at least }(\epsilon/2)^{O(1)}\abs{X}\textrm{ many }x\in X\]
and 
\[M\leq \begin{cases} \exp(O(\epsilon^{-O(1)}))&\textrm{ if }s\leq 3\textrm{ and }\\ \exp(\exp(O(\epsilon^{-O(1)})))&\textrm{ if }s\geq 4.\end{cases}\]
\end{theorem}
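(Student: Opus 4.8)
The natural plan is induction on the degree $s$, differencing to drop the degree and then reassembling. The base case $s=1$ is a Fourier-analytic, Freiman-type statement: if $f$ is a degree-$1$ $\epsilon$-polynomial then $\partial_{h_1,h_2}f(x)=0$ on a positive-density set of $(x,h_1,h_2)\in\bbz_N^{3d}$, which forces $\partial_h f(x)$ to be essentially independent of $x$ for many $h$, say $\partial_h f(x)\approx\ell(h)$; the additive cocycle identity then makes $\ell$ an approximate homomorphism $\bbz_N^d\to\bbr$, and the $U^2$-type argument of the introduction, together with the freedom in the definition of a nilpolynomial to absorb bounded integer corrections and to lift past $G/\Gamma$, shows $f$ agrees on a large subset of $X$ with a degree-$1$ nilpolynomial. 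Iterated through the induction, this base case is the origin of the worst quantitative losses, which is why the final complexity bound is (doubly) exponential.

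\textbf{Differencing.} For the inductive step, difference once. Pigeonholing in $h$, there is a set $B\subset\bbz_N^d$ of density $\gg_s\epsilon$ such that for each $h\in B$ the function $g_h:=\partial_h f$ is a $\gg_s\epsilon$-polynomial of degree $s-1$ on its (still large) domain. By the inductive hypothesis each $g_h$ agrees, on a $\gg_s\epsilon^{O(1)}$ fraction of that domain, with a nilpolynomial $\tilde g_h$ of degree $s-1$, dimension $O(\epsilon^{-O(1)})$, and controlled complexity. Crucially the family $\{\tilde g_h\}_{h\in B}$ is constrained: the additive cocycle identity $\partial_{h+k}f(x)=\partial_h f(x)+\partial_k f(x+h)$ gives $\tilde g_{h+k}(x)\approx\tilde g_h(x)+\tilde g_k(x+h)$, and the symmetry $\partial_h\partial_k f=\partial_k\partial_h f$ gives $\partial_k\tilde g_h\approx\partial_h\tilde g_k$ — both on large sets of the relevant variables.

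\textbf{The main obstacle: integration.} The heart of the proof, and the step I expect to dominate the work, is to turn this loosely coherent family into a single nilpolynomial of degree $s$ whose derivative recovers the $g_h$. One must first upgrade the approximate cocycle and symmetry relations to exact ones, after passing to additively structured subsets of the $h$-variable and allowing bounded modifications of the data (this is where repeated Cauchy-Schwarz, arithmetic regularity, and the $s=1$ sub-argument re-enter, and where the extra exponential for $s\ge 4$ is incurred by an inefficient recursion). One must then assemble the data into a filtered Lie group $G$ of degree $s$: its degree-$\le s-1$ quotient is a common refinement of the groups underlying the $\tilde g_h$, while its top layer $G_s$ is built from the symmetric $2$-cocycle extracted from the family, with the group law, the filtration condition $[G_i,G_j]\le G_{i+j}$, and the commutator bounds all requiring verification, and all feeding into the claimed polynomial dimension and (doubly) exponential complexity. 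Equivalently this is a quantitative, finitary theory of approximate cohomology — of solving the difference equation $\partial_h g=g_h$ — replacing the qualitative nilmanifold-equidistribution and symmetrisation machinery of Green-Tao-Ziegler; the language of Host-Kra cube groups is the natural home for the bookkeeping. Keeping every parameter under control here is precisely the content of Manners' paper.

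\textbf{Closing the induction.} Once $g:=F\circ\rho$ has been constructed with $\partial_h g(x)=\partial_h f(x)$ for $\gg_s\epsilon^{O(1)}$ many pairs $(x,h)$, the difference $f-g$ is an approximate polynomial of degree $0$ — that is, $(f-g)(x)=(f-g)(x+h)$ on many pairs — so a popular-value (Cauchy-Schwarz plus pigeonhole) argument makes $f-g$ constant on a $\gg_s\epsilon^{O(1)}$ fraction of $X$; absorbing that constant into $g$ yields a degree-$s$ nilpolynomial agreeing with $f$ on a $(\epsilon/2)^{O(1)}$ fraction of $X$, as claimed. Throughout one must track the density and complexity losses at each Cauchy-Schwarz, regularity, and pigeonhole step to extract the stated bounds $D=O(\epsilon^{-O(1)})$ and $M\le\exp(O(\epsilon^{-O(1)}))$ (respectively $\exp(\exp(O(\epsilon^{-O(1)})))$ when $s\ge 4$).
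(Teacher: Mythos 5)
Your proposal is essentially the Green--Tao--Ziegler induction-on-degree strategy transplanted from nilsequences to nilpolynomials, and it contains a genuine gap at exactly the point where that strategy has always resisted quantification. Having applied the inductive hypothesis to obtain a family $\{\tilde g_h\}_{h\in B}$ of degree-$(s-1)$ nilpolynomials satisfying only \emph{approximate} cocycle and symmetry relations on \emph{1\%-dense} sets of the variables, you defer the entire difficulty to the paragraph beginning ``One must then assemble the data into a filtered Lie group $G$ of degree $s$.'' Upgrading approximate relations among an ensemble of nil-objects to exact ones, and then stitching that ensemble into a single object one degree higher, is precisely the step that in the Green--Tao--Ziegler argument requires the equidistribution theory of polynomial sequences on nilmanifolds and a delicate symmetrisation, and which --- when iterated over $s$ --- compounds losses to Ackermann-type bounds rather than the claimed $\exp(O(\epsilon^{-O(1)}))$. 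Asserting that ``keeping every parameter under control here is precisely the content of Manners' paper'' is not accurate: Manners' paper does not carry out this step at all, because his proof deliberately avoids inducting on $s$ at the level of nil-objects. Instead he works throughout with raw real-valued functions: he first passes from 1\% to 99\% structure by building a \emph{polynomial hierarchy} of height $s$ in one go (via a random-sumset extension of the domain and repeated use of the additive cocycle identity for $\partial$), then shows the integer coefficient functions in that hierarchy satisfy a \emph{generalised cocycle} condition, inverts that condition to exhibit them as carry bits of polynomials modulo $\bbz$, and only at the very last step assembles a nilpolynomial algebraically from the whole hierarchy at once. The nilmanifold appears only at the end, never as an inductive input.

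Two smaller but substantive errors in your quantitative accounting: the base case $s=1$ is \emph{not} the source of the worst losses (it is polynomial in $\epsilon$, by the Fourier-analytic argument), and the double exponential for $s\geq 4$ does not arise from iterating the base case. It arises from the dichotomy argument used to pass from the derivatives condition to the strong derivatives condition --- the step where one grows the 1\% of $s$-dimensional cubes on which $\partial g$ vanishes to 99\%, using gluing and translation of cubes together with the hypothesis that $\bbz_N$ has no large proper subgroups. That subgroup dichotomy is also why the whole method is specific to $\bbz_N$ with $N$ prime, a structural feature your proposed induction would not see at all.
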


We now sketch how Manners deduces the inverse theorem for uniformity norms from this inverse theorem for approximate polynomials. After this, we will focus on the proof of Theorem~\ref{approx}, and make no further mention of uniformity norms.

\begin{proof}[Sketch proof of Theorem~\ref{thmain} from Theorem~\ref{approx}.]
This part of the proof of Manners uses Fourier analysis combined with elaborate but elementary arguments involving repeated use of the Cauchy-Schwarz inequality. In this it has more in common with the work of \textcite{Go01} than the work of \textcite{GTZ12}, which replaces the use of Fourier analysis with a far more complicated generalised theory of nilsequences of higher degree. This is inevitable in their approach as they have much less combinatorial information about the functions they are working with, and hence have to use as an inductive hypothesis the inverse theorem for the $U^s$ norm quite early in the argument. This produces an ensemble of nilsequences of degree $s$, which must then be explicitly stitched together somehow to form a nilsequence of degree $s+1$.

Instead of using the fact that $\norm{f}_{U^{s+1}}^{2^{s+1}}=\bbe_h \norm{\Delta_h f}_{U^s}^{2^s}$ to relate the $U^{s+1}$ norm of $f$ to $U^s$-type information for the 1-variable function $\Delta_h f$, Manners uses the relationship
\[\norm{f}_{U^{s+1}}^{2^{s+1}} = \bbe_{\bvec{h}\in \bbz_N^{s-1}} \abs{\bbe_x \Delta_{\bvec{h}}f(x)}^2\]
to relate the $U^{s+1}$ norm of $f$ to $U^2$-type information for the $(s-1)$-variable function
\[S_s f(\bvec{h})=\bbe_x \Delta_{\bvec{h}}f(x).\]
We can then take the Fourier transform in one of the $h$ variables and apply Parseval's identity to find many $\bvec{h}\in \bbz_N^{s-2}$ such that there exists some $\chi_{\bvec{h}}\in \widehat{\bbz_N}$ with
\[\abs{\bbe_{h} S_s(\bvec{h},h)\overline{\chi_{\bvec{h}}(h)}}\gg_\delta 1.\]
Repeated use of the Cauchy-Schwarz inequality then implies that the function $\Phi: \bbz_N^{s-2}\to \widehat{\bbz_N}$ which selects this large character $\chi_{\bvec{h}}$ for each $\bvec{h}$ is an approximate polynomial, and we can apply Theorem~\ref{approx} to find some nilpolynomial $P$ which agrees with $\Phi$ for many $\bvec{h}\in \bbz_N^{s-2}$.

There are some serious difficulties at this point arising from the fact that the set of $\bvec{h}$ where $\Phi$ agrees with $P$ is possibly quite sparse, which Manners overcomes by an elegant argument involving a partition of unity and arguing with a number of nilpolynomials at once. Ignoring such issues, and using the symmetry of $S_sf$, we can show that there exists some nilpolynomial $P$ such that
\[\abs{\bbe_{\bvec{h}\in \bbz_N^{s-1}} S_sf(\bvec{h})\overline{e(P(\bvec{h}))}}\gg_\delta 1.\]
We have found correlation of the $(s-1)$-variable function $S_sf$ with a function that resembles a multi-variable nilsequence. It remains to deduce correlation of $f$ itself with a nilsequence. This is done by projecting $e(P(h_1,\ldots,h_{s-1}))$ down to a 1-variable function by replacing it by
\[x\mapsto e(P(h_1+x,\cdots,h_{s-1}+x))\]
for a randomly chosen $\bvec{h}$. Further combinatorial arguments (again relying heavily on the Cauchy-Schwarz inequality) imply that $f$ itself correlates with (essentially) this function, and using the nilpolynomial structure of $P$ one can show that this 1-variable function is a nilsequence, thus completing the proof of Theorem~\ref{thmain}. 

While we have not discussed the behaviour of quantitative factors in this proof, the lack of iterative arguments and reliance on purely elementary arguments involving the Cauchy-Schwarz inequality should reassure the reader that the quantitative strength of Theorem~\ref{approx} is transferred relatively undiminished to Theorem~\ref{thmain}.
\end{proof}

%%%%%%%%%%%%%%%%%%%%%%%%%%%%%%%%%%%%%%%%%%%%%%%%%%%%%%%%%%%%%%%%%%%
%%%%%%%%%%%%%%%%%%%%%%%%%%%%%%%%%%%%%%%%%%%%%%%%%%%%%%%%%%%%%%%%%%%

\section{From 1\% structure to 99\% structure}
The rest of this article will discuss the proof of the inverse theorem for approximate polynomials, Theorem~\ref{approx}. In a high-level overview of an argument like this that uses many quantitative parameters it is convenient to use the 1\% and 99\% heuristic language. This means that we will describe a parameter as being 1\% if it is something like $\epsilon>0$, where $\epsilon$ is to be thought of as very small, and as being 99\% if it is more on the scale of $1-\epsilon$. We will continue to use precise notation in some places, but will not hesitate in using this rougher language where it clarifies the exposition, and refer the reader interested in seeing precise versions of what follows to \textcite{Ma18}. 

The ideas employed in what follows are valid in any finite abelian group, so we will use $H$ to denote an arbitrary finite abelian group (which, for our applications, will be $\bbz_N^d$ for some $d\leq s$, but that need not concern us here). Unless otherwise specified, $X$ denotes some subset of $H$ of 1\% density (i.e. $\abs{X}\geq \epsilon\abs{H}$ for some small $\epsilon>0$).

We are given some approximate polynomial $f:X\to \bbr$ of degree $s$, so that the function $\partial_{\bvec{h}}f(x)$ vanishes for at least 1\% of $(x,\bvec{h})\in H^{s+2}$. Our eventual goal is to show that there is a nilpolynomial $g$ such that $g(x)=f(x)$ for 1\% of $x\in H$. This will be done in three stages:
\begin{enumerate}
\item As $f$ only has a 1\% type of structure we will first need to find some $f'$ which has a 99\% type of structure, such that $f(x)=f'(x)$ for 1\% of all $x\in H$;
\item We then show that a function which has this 99\% type of structure must have some weak algebraic structure;
\item Finally, we show that a function with this weak algebraic structure must agree with a genuine nilpolynomial 1\% of the time (and that this 1\% can be arranged to be a subset of the 1\% of values from (1), so that $f$ itself agrees with a genuine nilpolynomial 1\% of the time).
\end{enumerate}

In this section we discuss the first of these steps. Heuristically, we aim to find some $f'$ which agrees with $f$ for a 1\% proportion of $H$, but such that $f'$ itself is a 99\%-approximate polynomial. This cannot be established directly, but we will show that it is true, aside from possible obstructions of lower degree. We can then use the same argument to show that these obstructions themselves enjoy a 99\% type of structure, modulo further obstructions of even lower degree, and so on. This cascading type of structure is captured by the notion of a `polynomial hierarchy', and we will give the definition of this before stating the main result of this section.  

\subsection{Polynomial Hierarchies}
We will be working with derivatives of different orders, for which the following language will be useful. A cube of dimension $k$ is formally just an element of $H^{k+1}$, but we will generally write it as $c=(x,\bvec{h})$ for some $x\in H$ and $\bvec{h}\in H^k$. This should be thought of as a fixed basepoint $x$ together with a $k$-dimensional direction vector $\bvec{h}$. The coordinates of a cube are the $h_1,\ldots,h_k$ components, and the $x$ component is referred to as the basepoint. A $d$-dimensional face of a cube of dimension $k$ is a $d$-dimensional cube obtained by removing any $k-d$ of the directional components and replacing the basepoint $x$ by $x+\sum_{i\in I}h_i$ for some subset $I$ of the removed $k-d$ components. When $c=(x,\bvec{h})$ we will write
\[\partial f(c)=\partial_{\bvec{h}}f(x),\]
so that in particular $\partial f$ is a function on a set of cubes, the dimension of which is encoded in the cube $c$ itself. Writing out the definition in full, we have
\[\partial f(c)=\sum_{\omega\in \{0,1\}^k}(-1)^{\lvert \omega\rvert}f(x+\omega\cdot \bvec{h}).\]
Note that if $c'$ is obtained from $c$ by a permutation of the (directional) coordinates then $\partial f(c)=\partial f(c')$. Furthermore, if $c'$ is obtained from $c$ by a `coordinate reflection', where $x\mapsto x+h_i$ and $h_i\mapsto -h_i$, then $\partial f(c')=-\partial f(c)$. This means that if we are considering the vanishing of $\partial f$ on some set of cubes then it is natural to ask that such a set be closed under coordinate permutations and reflections. 

Since we will also be moving between taking derivatives of different orders for different sets of cubes it is desirable to have some natural compatibility conditions between the sets of cubes at different levels. These conditions are captured by the following definition. 
\begin{definition}[System of cubes]
A system of cubes of degree $s$ and density $\delta>0$ is a family $(S_0,\ldots,S_{s+1})$ of non-empty sets of cubes, with $S_i\subset H^{i+1}$, such that 
\begin{enumerate}
\item each level is symmetric, i.e. each $S_k$ is closed under permutations of the (directional) coordinates and under coordinate reflections, where $x\mapsto x+h_i$ and $h_i\mapsto -h_i$,
\item the system is closed under taking faces, so that if $c\in S_k$ and $c'$ is a face of $c$ of dimension $d$ then $c'\in S_d$, and 
\item for $0\leq k\leq s$, any $(x,\bvec{h})\in S_k$ can be extended in at least $\delta \abs{H}$ many ways to some $(x,\bvec{h},h_{k+1})\in S_{k+1}$.
\end{enumerate}
\end{definition}
Note that the second and third conditions ensure that $\abs{S_0}\geq \delta \abs{H}$, and hence by induction $\abs{S_k}\geq \delta^{k+1}\abs{H}^{k+1}$ for $0\leq k\leq s+1$. In our applications $\delta$ will usually be a 99\% parameter (i.e. very close to $1$) so a system of cubes will consist of 99\% of all possible cubes.

The concept of a `derivatives condition' underpins everything which follows, as it allows us to talk of the derivatives of some function being expressible in terms of simpler functions. For comparison, note that by the definition of derivative, namely
\[\partial g(c)=\sum_{\omega\in \{0,1\}^k}(-1)^{\lvert \omega\rvert}g(x+\omega\cdot \bvec{h}),\]
we know that $\partial g$ can be trivially expressed as a sum involving $g$ and the projected cubes $x+\omega\cdot \bvec{h}$. The $(\bvec{f},k,t,M)$-derivatives condition says that the $k$th derivative of $g$ can instead be expressed as a `low-complexity' (measured by the parameter $M$) linear combination of functions from $\bvec{f}$ -- moreover, only those of level $\leq t-1$. In particular, if $t=0$ this means that $\partial g\equiv 0$ on $S$. More precisely, we have the following.
\begin{definition}[Derivatives condition]
Let $s\geq 1$, let $(d_1,\ldots,d_s)$ be positive integers, and let $k$ and $t$ be integers such that $0\leq k,t\leq s+1$. Let $X\subset H$ and $S\subset H^{k+1}$ be a set of cubes such that $x+\omega\cdot \bvec{h}\in X$ for every $(x,\bvec{h})\in S$ and $\omega\in \{0,1\}^k$. Let $M\geq 1$ be an integer. Let $\bvec{f}=(f_i:X\to\mathbb{R}^{d_i})_{0\leq i\leq s}$ be a family of functions and let $b:S\times \{0,1\}^k\to \bbz^{d_0+\cdots+d_{t-1}}$ be a function.

A function $g:X\to\mathbb{R}$ satisfies the $(\bvec{f},k,t,M)$-derivatives condition on $S$ with coefficients $b$ if $\norm{b(c,\omega)}_1\leq M$ for all $c\in S$ and $\omega\in\{0,1\}^k$, and for all $c=(x,\bvec{h})\in S$
 \[\partial g(c)=\sum_{\omega\in\{0,1\}^k}(-1)^{\lvert \omega\rvert} b(c,\omega)\cdot \bvec{f}_{\leq t-1}(x+\omega\cdot \bvec{h}).\]
 Here $\abs{\omega}=\omega_1+\cdots+\omega_k$ is the Hamming weight of $\omega$, and $\bvec{f}_{\leq {t-1}}$ is the family of functions  $(f_i:X\to\mathbb{R}^{d_i})_{0\leq i\leq t-1}$.
\end{definition}

In isolation, this definition has very little content -- indeed, every $g$ satisfies the $(\bvec{f},k,1,1)$-derivatives condition for any $k\geq 1$ if we let $f_0=g$. Its importance is that it lets us talk about moving `between levels', so that we can express the derivative of $g$ in terms of simpler functions. It is important to note that we allow the function $b(c,\omega)$ to depend in some completely unspecified fashion on $c$ and $\omega$, and indeed its dependence may be quite unwieldy (which is a complication we will have to deal with later on). Crucially, however, it only takes bounded integer values, and so it is simpler in this sense than the less-controlled $\partial g:S\to \bbr$.

The following examples are instructive. Consider first the bracket linear function $g:\bbz_N\to\bbr$ defined, for some fixed $a\in \bbz$, by 
\[g(x) = \{ ax/N\}.\]
This is an approximate polynomial of degree 1. It is easy to check that $g(x+y)=g(x)+g(y)-b_{x,y}$ where $b_{x,y}\in\{0,1\}$ (in fact, $b_{x,y}=1$ if and only if $\{ax/N\}+\{ay/N\}\geq 1$). It follows that, for any $c=(x,h_1,h_2)$,
\begin{align*}
\partial g(c) &= g(x)-g(x+h_1)-g(x+h_2)+g(x+h_1+h_2)\\
&=b_{x,h_2}-b_{x+h_1,h_2}\\
&=b(c)
\end{align*}
where $b(c)\in \bbz$ and $\abs{b(c)}\leq 1$. In particular, the second derivative of $g$ is always a bounded integer. We do not have much control on how this integer depends on $c$, but the key point is that the derivative of $g$ can be expressed as an integer linear combination of `simpler' functions (in this case, just the constant function $\equiv 1$). 

We now consider a more complicated example, the bracket quadratic function $g:\mathbb{Z}_N\to \mathbb{R}$ defined, for some fixed $a,b\in\bbz$, by
\[g(x) = \{ ax/N\}\{bx/N\},\]
which is an approximate polynomial of degree $2$. An explicit calculation, similar to the above, shows that, for any $c=(x,h_1,h_2,h_3)$, 
\[\partial g(c)= \sum_{\omega\in\{0,1\}^3}(-1)^{\lvert \omega\rvert}b(c,\omega)\brac{1+\{ax/N\}+\{bx/N\}},\]
where $b(c,\omega)\in \bbz$ and $\abs{b(c,\omega)}\leq 2$. In particular, the derivative of the quadratic-type function $g$, although it does not vanish completely like that of a genuine quadratic, can be expressed as a simple linear combination of the linear-type functions $1$, $\{ax/N\}$, and $\{bx/N\}$. We encourage the reader to bear these examples in mind for what follows.

Having expressed the derivative of $g$ in terms of simpler functions of a lower level, it is natural to attempt the same with these simpler functions. By applying the derivatives condition inductively, so the derivatives of these functions can themselves be simplified, and so on, we arrive at the following key definition.

\begin{definition}[Polynomial hierarchy]
Let $X\subset H$ and $(S_0,\ldots,S_{s+1})$ be a system of cubes such that, for $0\leq i\leq s+1$, we have $x+\omega\cdot\bvec{h}\in X$ for every $(x,\bvec{h})\in S_i$ and $\omega\in \{0,1\}^i$.

A system of functions $\bvec{f}=(f_i:X\to \mathbb{R}^{d_i})_{0\leq i\leq s}$ is a polynomial hierarchy of height $s$ and complexity $M$ on $S_0,\ldots,S_{s+1}$ if, for any $0\leq i\leq s$, the function $f_i$ satisfies the $(\bvec{f}, i+1, i,M)$-derivatives condition on $S_{i+1}$. 

Furthermore, we say that a function $g:X\to\mathbb{R}$ sits at the top of a $(D,M)$-polynomial hierarchy of height $s$ on $(S_0,\ldots,S_{s+1})$ if there is a polynomial hierarchy $\bvec{f}$ of height $s-1$ and complexity $M$ on $(S_0,\ldots,S_s)$ with $d_0+\cdots+d_s=D$ such that $g$ satisfies the $(\bvec{f},s+1,s,M)$-derivatives condition on $S_{s+1}$. 
\end{definition}
Technically we have only defined the derivatives condition for functions $g:X\to\mathbb{R}$, but this is easily generalised to functions $g:X\to\mathbb{R}^{d_i}$ by interpreting it as holding for all $d_i$ individual component functions (where we allow different coordinate functions for each $1\leq j\leq d_i$).  Note that as above the condition at $i=0$ is equivalent to $\partial f_0(c)=0$ for all $c\in S_1$, which (provided $\abs{S_1}> \tfrac{1}{2}\abs{H}^2$) is equivalent to $f_0:X\to \bbr^{d_0}$ being a constant function.

Roughly speaking, this definition is saying that all the functions on level $i$ can be reduced to functions on level $i-1$ after taking derivatives of order $i+1$. In applications we will think of functions on level $i$ as resembling polynomials of degree $i$. This definition may then seem odd, since then one would expect to reduce to level $i-1$ after only a single derivative. 

The way to correct one's intuition is to realise that the relationship between level $i$ and the lower levels of a polynomial hierarchy does {\em not} correspond to that between polynomials of degree $i$ and \emph{all} polynomials of degree $\leq i-1$. Instead, the levels $\leq i-1$ are collections of obstructions that capture how objects on level $i$ {\em fail} to be polynomials of degree $i$. Put another way, if a function $f$ on level $i$ is meant to resemble a polynomial of degree $i$ then we expect $\partial^{i+1} f=0$. The functions in $f_{\leq i-1}$ capture to what extent this fails. We now give a more easily remembered `slogan form' of the definition, summarising this discussion. 

\begin{definition}[Polynomial hierarchy, slogan form]
A system of functions $\bvec{f}=(f_i: X\to \bbr^{d_i})_{0\leq i\leq s}$ is a polynomial hierarchy if functions on level $i$ behave like polynomials of degree $i$ up to errors which are a bounded linear combination of functions on levels $\leq i-1$.
\end{definition}

\subsection{Building a polynomial hierarchy}
The following is the first major step in the proof of Theorem~\ref{approx}. To avoid obscuring the content under technicalities we state it in rough form only. For the full, precise, statement, see \textcite[Corollary 2.5.4]{Ma18}.
\begin{proposition}\label{proppoly}
Suppose that $f:X\to\mathbb{R}$ is a 1\%-approximate polynomial of degree $s$. There exists a system of cubes $(S_0,\ldots,S_{s+1})$ which are 99\% dense (i.e.\ of density $\delta=1-\epsilon$ for some small $\epsilon>0$) and $g:S_0\to\mathbb{R}$ such that 
\begin{itemize}
\item $g$ agrees with $f$ 1\% of the time and
\item $g$ sits at the top of a polynomial hierarchy of height $s$ on $S_0,\ldots,S_{s+1}$.
\end{itemize}
\end{proposition}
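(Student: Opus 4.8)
The plan is to build the polynomial hierarchy from the top down by iterated applications of a single ``descent'' step. The starting point is the hypothesis that $f:X\to\bbr$ is a $1\%$-approximate polynomial of degree $s$, which says precisely that $\partial f(c)=0$ for a $1\%$ proportion of $(s+1)$-dimensional cubes $c$. The first task is to upgrade this $1\%$ structure to $99\%$ structure on a slightly smaller domain: by a density-increment / pigeonholing argument (of the kind used by \textcite{Go01} and refined by Manners), one restricts $x$ to a $1\%$-dense subset $X'\subseteq X$ on which $\partial f$ vanishes on a $99\%$ proportion of cubes with all vertices in $X'$. This is where one should be careful: one does not get this for free, and the honest statement is that $\partial f$ can be corrected, off a small exceptional set, to vanish on $99\%$ of cubes — equivalently, there is a function $g$ agreeing with $f$ on $1\%$ of $X$ such that the top-level derivatives condition holds with $t=s$, i.e. $\partial g(c)=0$ on a $99\%$-dense set $S_{s+1}$ of $(s+1)$-cubes. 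That $99\%$-dense set of top cubes, together with all of its faces, furnishes the candidate system $(S_0,\dots,S_{s+1})$, and one checks the three axioms (symmetry under permutations and coordinate reflections — which is automatic since the vanishing of $\partial g$ is invariant under these; closure under faces — built in by fiat; and the $\delta\abs{H}$-many extensions property — which follows from a counting/Markov argument because we threw away only a small density of bad cubes).

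Having placed $g$ at the top with $t=s$, the content is now to produce the lower levels $f_0,\dots,f_{s-1}$, i.e. to exhibit the coefficient functions $b(c,\omega)$ and the family $\bvec f$. The mechanism is: take the $s$-dimensional faces of cubes in $S_s$ and restrict $g$ to them; on such a face the $s$-th derivative $\partial_{\bvec h}g$ is not identically zero but — because the $(s+1)$-st derivative vanishes on a $99\%$ set — it is a ``cocycle-like'' object that depends additively on the basepoint in a constrained way. One extracts from it new functions (think of the carry-bit functions in the worked examples: $\partial g$ on a $2$-cube of a bracket-linear function is a $\pm1$-valued function, and on a $3$-cube of a bracket-quadratic it is a bounded-integer combination of bracket-linear functions). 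Concretely, this is a ``parallelogram / Cauchy–Schwarz'' argument: the vanishing of $\partial^{s+1}g$ on a dense set forces $\partial^s g$, as a function on $S_s$, to be expressible as a low-complexity integer combination of functions pulled back from the vertices — this is exactly the $(\bvec f, s, s-1, M)$-derivatives condition for a new vector-valued function $f_{s-1}$, whose components are these extracted functions, living on a possibly-shrunk but still $99\%$-dense $S_s$. One then repeats: $f_{s-1}$ is now a function whose $s$-th derivative vanishes on a dense set of cubes (since it arose as bounded integer data, and bounded integer functions with small derivatives are locally constant on dense sets), so the same descent applies to produce $f_{s-2}$, and so on down to $f_0$, which sits at $t=0$ and is therefore (up to correction on a sparse set, using $\abs{S_1}>\tfrac12\abs H^2$) constant. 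At each stage the dimensions $d_i$ and the complexity $M$ grow in a controlled way, and one intersects all the systems of cubes produced along the way to get a single system $(S_0,\dots,S_{s+1})$ that is still $99\%$-dense — here one uses that a bounded number ($O_s(1)$) of intersections of $99\%$-dense, face-closed, symmetric systems is again such a system, after adjusting the small parameter $\epsilon$.

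The main obstacle is the descent step itself — showing that ``$\partial^{k+1}g$ vanishes on a $99\%$-dense system of cubes'' implies ``$\partial^k g$ is a bounded-integer-combination of lower functions on a $99\%$-dense system''. This is genuinely the heart of the matter and is where Manners' quantitative care is concentrated: one must (i) run a Cauchy–Schwarz / Gowers-type argument to convert $1\%$ or $99\%$ vanishing of a higher derivative into an exact cocycle identity for a lower derivative on a slightly smaller dense set, (ii) produce the integer-valued coefficient functions $b(c,\omega)$ with $\norm{b(c,\omega)}_1\le M$ — this is where one pays with a complexity blow-up, and is the source of the single- versus double-exponential dichotomy in $s$ — and (iii) verify that the newly-created functions $f_i$ are defined on a common domain $X$ (the same $1\%$-dense set throughout) and on a system of cubes satisfying all three axioms. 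Keeping all of these dense sets compatible as one descends through $s$ levels, without the density collapsing or the complexity exploding, is the real work; everything else (the top-level correction, the bookkeeping of dimensions, the final identification of $f_0$ as constant) is comparatively routine. For the precise version of all of this we refer to \textcite[Corollary 2.5.4]{Ma18}.
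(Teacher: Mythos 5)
There is a genuine gap here, and it comes in two related pieces. First, your construction never extends the domain of $f$. The conclusion requires $g:S_0\to\bbr$ where, by the face-closure and extension axioms of a system of cubes, $S_0$ must be a $99\%$-dense subset of $H$; but you only ever pass to a subset $X'\subseteq X$, which is at most $1\%$ of $H$, and the $(s+1)$-cubes with all vertices in such a set cannot form a $99\%$-dense system. The paper's key lemma handles this with a random sumset: one chooses a random $A\subset H$ of size $O(1)$ so that $X-A$ covers $99\%$ of $H$ and sets $f'(x)=f(x+a)$ for an admissible $a\in A$. Crucially, the ambiguity in this choice, $g_{a,b}(x)=f(x+a)-f(x+b)=\partial_{a-b}f(x+a)$, is precisely what supplies the lower levels of the hierarchy: by a pigeonhole observation these differences are $1\%$-approximate polynomials of degree $s-1$, and the same lemma is applied to them recursively. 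Your proposal has no analogue of this step; your alternative mechanism for generating the lower levels (extracting them from the cocycle-like structure of $\partial^{s}g$) belongs to a later stage of Manners' argument, and in any case cannot be run before the functions $\bvec{f}_{\leq s-1}$ exist, since the derivatives condition is a statement about an already-given family $\bvec{f}$.

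Second, your claim that $f$ can be corrected, off a small set, to a $g$ with $\partial g(c)=0$ on a $99\%$-dense set of $(s+1)$-cubes is false, and the ``i.e.'' equating this with the $t=s$ derivatives condition misreads the definition: only $t=0$ forces $\partial g\equiv 0$, whereas $t=s$ permits a bounded integer combination of lower-level functions, and that combination genuinely cannot be removed. For instance, with $s=1$, $H=\bbz_N$ and $f(x)=\{ax/N\}$, any function whose second derivative vanishes on $99\%$ of $2$-cubes is (by the rigidity of $99\%$-approximate polynomials on a group with no proper subgroups) essentially constant, and hence agrees with the injective function $f$ on only $O(1/N)$ of the domain, not $1\%$. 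This is exactly why the paper states that the $99\%$ vanishing ``cannot be established directly'' and only holds modulo lower-degree obstructions; the content of the proposition is the simultaneous construction of those obstructions, which your argument omits.
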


Recalling the definition of a polynomial hierarchy, this proposition allows to pass from knowing that the $(s+1)$-derivative of $f$ vanishes 1\% of the time, to knowing that the $(s+1)$-derivative of $g$ vanishes 99\% of the time, modulo a collection of functions whose $s$-derivatives vanish 99\% of the time, and so on, until we reach constant functions at the bottom of the hierarchy.

We will see how to make use of this hierarchical structure in the next section. For the remainder of this section, we will sketch how Proposition~\ref{proppoly} is proved. The main ingredient is a recursive application of the following lemma, which says that a 1\%-approximate polynomial of degree $s$ agrees 1\% of the time with a function that is a 99\%-approximate polynomial of degree $s$, up to obstructions which are a bounded complexity combination of degree $\leq s-1$ approximate polynomials. To get a flavour of the kind of difficulties involved the reader may find it instructive to calculate what $\partial_{h_1,h_2}f(x)$ is when $f(x)=\{\alpha x\}\{\beta x\}$. 

\begin{lemma}
Let $f:X\to \bbr$ be a 1\%-approximate polynomial of degree $s$. There is a function $f'$ defined on 99\% of $H$ such that
\begin{enumerate}
\item $f(x)=f'(x)$ for 1\% of $x\in X$;
\item there exists a collection of 1\%-approximate polynomials $g_1,\ldots,g_K$ of degree $s-1$ (for some reasonably bounded $K$); and
\item there exists a function $b: S\times \{0,1\}^{s+1}\to \bbz^K$, with $\norm{b(c,\omega)}_1$ reasonably bounded for each $c\in S$ and $\omega\in \{0,1\}^{s+1}$, such that for 99\% of $s+1$-dimensional cubes $c=(x;\bvec{h})\in H^{s+2}$
\[\partial f'(c) = \sum_{\omega\in \{0,1\}^{s+1}}(-1)^{\abs{\omega}}b(c,\omega)\cdot \bvec{g}(x+\omega\cdot\bvec{h}).\]
\end{enumerate}
\end{lemma}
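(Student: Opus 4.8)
The strategy is to upgrade the $1\%$ vanishing of $\partial_{\bvec{h}}f$ to a $99\%$ statement by a local plurality construction of $f'$, and then to read off the lower-degree obstructions from the ``defect'' of that construction. The model to keep in mind is $f(x)=\{\alpha x\}\{\beta x\}$: writing $\{\alpha(x+h)\}=\{\alpha x\}+\{\alpha h\}-\epsilon_\alpha(x,h)$ with $\epsilon_\alpha\in\{0,1\}$ (and similarly for $\beta$) and expanding, one checks that $\partial_{h_1,h_2}f(x)$ is a $\bbz$-linear combination, with bounded coefficients assembled from the carry bits, of the degree-$1$ functions $1$, $\{\alpha\cdot\}$, $\{\beta\cdot\}$ evaluated at the vertices $x+\omega\cdot\bvec{h}$ of the cube. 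The lemma asserts that this phenomenon is forced purely by the combinatorial hypothesis.

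First I would exploit the hypothesis to produce a usable ``$99\%$ core''. From $\#\{(x,\bvec{h})\in H^{s+2}:\partial_{\bvec{h}}f(x)=0\}\geq \epsilon\abs{H}^{s+2}$, an iterated Gowers--Cauchy--Schwarz argument (equivalently a dependent-random-choice / popularity argument, using the cocycle identity for $\partial$ to glue good cubes) should yield a subset $Y\subseteq X$ of density $\gg\epsilon^{O(1)}$ and a set of ``directions'' of density $\gg\epsilon^{O(1)}$ such that, for $99\%$ of choices of auxiliary parameters drawn from this set, the value $f(x)$ is pinned down --- up to a \emph{bounded integer} ambiguity --- by a fixed linear combination of values of $f$ at translates of $x$ by those parameters. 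This is the step where sparse combinatorial input becomes robust structure.

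Next, for $x$ in a $99\%$-dense subset of $H$, define $f'(x)$ to be the most popular value, over the auxiliary parameters, of that linear combination of translates of $f$. Since $Y$ had density $\gg\epsilon$ and the auxiliary structure is $99\%$, the plurality value coincides with $f(x)$ for $\gg\epsilon^{O(1)}\abs{X}$ many $x$, giving (1). It then remains to differentiate: for a generic $(s+1)$-dimensional cube $c=(x;\bvec{h})$, compute $\partial f'(c)$ from the plurality description. The part of $f'$ behaving like a genuine degree-$s$ polynomial is annihilated by the $(s+1)$-st derivative, and what survives is a combination, with bounded integer coefficients $b(c,\omega)$ (the accumulated carry bits of the construction, depending unwieldily but boundedly on $c,\omega$), of finitely many building blocks $g_1,\ldots,g_K$ evaluated at the vertices $x+\omega\cdot\bvec{h}$. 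These $g_i$ may be taken as the constant function together with a boundedly chosen family of single-direction derivatives $\partial_t f$; each is a $1\%$-approximate polynomial of degree $s-1$ since $\partial_{h_2,\ldots,h_{s+1}}(\partial_t f)(x)=\partial_{t,h_2,\ldots,h_{s+1}}f(x)=0$ for $1\%$ of $(x,h_2,\ldots,h_{s+1})$ when $t$ is a popular direction, and the bound on $K$ comes from a rank argument limiting how many of these can be ``independent''.

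The main obstacle is the interaction of the two upgrades above: carrying out the Cauchy--Schwarz boost so that simultaneously (i) the number $K$ of obstructions and the sizes of the coefficients $b(c,\omega)$ stay polynomially bounded --- essentially a reduction-to-a-basis over $\bbz$ for real-valued functions, where controlling integrality and boundedness of the carries at once is delicate --- and (ii) the necessarily sparse set on which $f'=f$ is not destroyed when passing to the refined structure. Reconciling these demands while tracking all quantitative parameters is precisely where Manners' argument requires its most careful bookkeeping.
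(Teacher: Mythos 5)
Your proposal correctly identifies the building blocks --- the obstructions $g_i$ should be shifted derivatives of $f$, which are $1\%$-approximate polynomials of degree $s-1$ by an averaging argument --- and the general strategy of extending $f$ to $f'$ via translates. But the two central mechanisms you propose are not the right ones, and each hides a genuine gap. First, the domain extension: a set of ``directions'' of density $\epsilon^{O(1)}$ extracted by Cauchy--Schwarz cannot be guaranteed to turn the $1\%$-dense domain $X$ into a $99\%$-dense one, since for structured $X$ and structured direction sets the difference set need not expand at all. The paper instead takes a \emph{random} set $A\subset H$ of size $O_\epsilon(1)$, for which $X-A$ covers $99\%$ of $H$ with high probability; the bound $K=\abs{A}^2=O(1)$ on the number of obstructions $g_{a,b}(x)=f(x+a)-f(x+b)$ then comes for free, with no rank argument needed. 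Second, your ``plurality value'' definition of $f'$ presupposes that the candidate values $f(x+a)$, as $a$ ranges over admissible translates, differ by bounded \emph{integers}. That is an artefact of the degree-one carry-bit model: in general the ambiguity $f(x+a)-f(x+b)=g_{a,b}(x)$ is a real-valued lower-degree approximate polynomial, so the candidate values are typically all distinct and there is no meaningful plurality. The correct move is to make an \emph{arbitrary} choice of $a$ and observe that the resulting ambiguity in $f'$ is exactly one of the obstruction functions $g_{a,b}$, hence harmless ``up to lower order terms''. (Integrality enters only later in Manners' argument, for the coefficients $b(c,\omega)$, not for the values of the obstructions.)

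The third gap is in deriving the derivative identity itself. You assert that ``the part of $f'$ behaving like a genuine degree-$s$ polynomial is annihilated by the $(s+1)$-st derivative'', but $f'$ is not yet known to have any polynomial part --- that is what the lemma is establishing, so this step is circular as written. The actual mechanism is combinatorial: since $\partial f$ vanishes on $1\%$ of all $(s+1)$-dimensional cubes, for $99\%$ of cubes $c=(x,\bvec{h})$ one can find a shift $c'=(x+a,\bvec{h}+\bvec{t})$ with $a,t_1,\ldots,t_{s+1}\in A$ on which $\partial f(c')=0$ exactly (this is the multi-dimensional analogue of the random covering of the domain), and by construction $\partial f'(c)$ differs from $\partial f(c')$ by a bounded integer combination of the $g_{a,b}$ evaluated at the vertices $x+\omega\cdot\bvec{h}$. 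Since $\partial f(c')=0$, the claimed identity follows. Without this ``shift to a genuinely vanishing cube'' step, your argument does not produce conclusion (3).
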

\begin{proof}[Sketch proof]
Our starting observation is that if $f$ is a 1\%-approximate polynomial of degree $s$ then, for at least 1\% of $h\in H$, the derivative $\partial_h f$ is a 1\%-approximate polynomial of degree $s-1$. Indeed, this is a simple consequence of the pigeonhole principle, averaging over those cubes on which the derivative of $f$ vanishes.

This suggests that we should take our auxiliary functions $g_i$ to be derivatives of $f$. If we define $g_{a,b}$ by 
\[x\mapsto f(x+a)-f(x+b)=\partial_{a-b}f(x+a)\]
then, for suitable choices of $a$ and $b$ (so that $a-b$ is one of the `good' $h$ from the previous paragraph), this is a 1\%-approximate polynomial. This is tricky, however, since this will only be defined when both $x+a$ and $x+b$ are in $X$. Overcoming this difficulty is a subtle matter, and requires careful tracking of the pairs $a,b$ and some initial combinatorial pruning of the set of cubes on which the derivative of $f$ vanishes. We will not address these complications further here, and refer to Section 2 of \textcite{Ma18}.

To define $f'$ we must somehow extend $X$, the domain of $f$ which is 1\% of the group, to a domain which is 99\% of the group. For this we use a random sumset construction. Namely, if $A\subset H$ is a random subset, then we expect the difference set $X-A$ to expand quickly, since there will be few incidences of the form $x_1+a_1=x_2+a_2$. In particular, we can ensure that $X-A$ is 99\% of the entire group by taking $A$ to be a random set of size $O(1)$. We will take the $g_i$ to be the 1\%-approximate polynomials of degree $s-1$ of the form $g_{a,b}$ where $a,b\in A$ (note that there are $O(1)$ such $g_i$).

We then would like to define $f':X-A\to \bbr$ by extending $f$ in the natural way, so that if $x\in X-A$ then $f'(x)=f(x+a)$, where $a\in A$ is chosen so that $x+a\in X$. The problem is that, in general, there will be many such $a$ (still $O(1)$, but certainly more than 1), and this is not an unambiguous definition of $f'$. We define $f'$ by choosing arbitrarily an appropriate $a\in A$ for each $y\in X-A$, and note that the difference between possible choices, namely $f(x+a)-f(x+b)$, is precisely the function $g_{a,b}$. In particular, the arbitrary nature of our choice only affects the definition of $f'$ `up to lower order terms'. 

Finally, after suitable combinatorial pruning, one can show that for 99\% of all possible cubes $c=(x,\bvec{h})\in H^{s+2}$ there exists some shift $c'=(x+a,\bvec{h}+\bvec{t})$ where $a,t_1,\ldots,t_{s+1}$ all lie in the random set $A$, and $\partial f(c')$ vanishes, since $\partial f$ vanishes for at least 1\% of all possible cubes (this is a multi-dimensional version of the fact alluded to above that if $X$ has density 1\% then, provided $A$ is a random set of large but constant size, we expect $X-A$ to have density 99\%). As above, by construction of $f'$ and the auxiliary $g_{a,b}$, one can then show that $\partial f'(c)=\partial f(c')$ up to a `lower order error' which is a bounded linear combination of the $g_{a,b}$ functions. Since $\partial f(c')=0$, however, $\partial f'(c)$ itself is such a linear combination, and we are done.
\end{proof}

\section{From 99\% structure to algebraic structure}

We will now discuss how to pass from the 99\% kind of structure of a polynomial hierarchy, which is inherently combinatorial, to a more algebraic kind of structure, and in particular agreeing often with a genuine nilpolynomial. Recall that we left the previous section with a function $g$ sitting at the top of a polynomial hierarchy of height $s$, denoted by $\bvec{f}$, which means that we can write the derivatives of functions on level $t$ of the hierarchy at cubes of dimension $t$ as
\[\partial f_t(c) = \sum_{\omega\in \{0,1\}^t} b(c,\omega)\cdot \bvec{f}_{\leq t-1}(x+\omega\cdot \bvec{h}).\]
The $b(c,\omega)$ here are some coefficients which, aside from being integer vectors of bounded size, we do not know much about. In particular, we have no information about how they vary with $c$ and $\omega$. 

Suppose that the coefficients $b(c,\omega)$ are independent of $c$. Then whenever $f_t$ is on level $t$ we can write $\partial f_t$ as a simple linear combination of functions of the shape
\[\partial f_t(x,\bvec{h})=\sum_{\omega\in \{0,1\}^t} b_\omega f_{t-1}(x+\omega\cdot \bvec{h}).\]
In particular, the derivatives of order one of the functions on level $t=0$ vanish, and hence these functions are constant, and thus the functions on level $1$ have constant derivatives, and hence must be genuine polynomials of degree $1$. Propagating upwards, we find that $g$ itself must be a genuine polynomial of degree $s$, which is exactly the kind of inverse statement that we are trying to prove. 

Sadly, we cannot expect that the coordinates $b(c,\omega)$ are always this simple. It is helpful to recall the earlier example of a bracket linear (or bracket quadratic function), where the coordinate functions are given by linear combinations of `carry bit' functions. For example, if $f(x)=\{ ax/N\}$ then 
\[\partial f(x,h_1,h_2)= b_{x,h_2}-b_{x+h_1,h_2}\]
where $b_{x,y}=1_{\{ ax/N\}+\{ay/N\}\geq 1}$.

The strategy of Manners is to show that, in general, the functions $b(c,\omega)$ behave structurally like the carry bit functions of bracket polynomial examples. Having done so, using the fact that the functions on level $t=0$ are constant, one can propagate upwards through the levels, showing that the functions on each level resemble a bracket polynomial, eventually arriving at $g$ on the top level.

We will focus, therefore, on the structure of the coefficient functions in the polynomial hierarchy produced in the previous section, and showing that they behave similarly to the carry bit coefficients of the bracket polynomial examples. The key property of carry bits that we require Manners calls a `generalised cocycle' condition.

There are, overall, three main steps to complete the proof of Theorem~\ref{approx}: one needs to show that 
\begin{itemize}
\item if $g$ sits at the top of a polynomial hierarchy of height $s$ then (after some quantitative loss) not only does $g$ satisfy a $(\bvec{f},s+1,s,M)$-derivatives condition, but it also satisfies a stronger derivatives condition where the coefficients $b$ themselves satisfy a `generalised cocycle' condition;
\item all functions satisfying a generalised cocycle condition arise in a similar fashion to carry bits (i.e. as the `integral residue' of derivatives of functions whose derivatives vanish modulo $\bbz$);
\item the polynomials whose carry bits produce the coefficients $b$ can be reconstructed from $b$, and the fact that $g$ sits at the top of a polynomial hierarchy with coefficients $b$ means we can propagate this polynomial construction through the hierarchy to construct a genuine nilpolynomial which agrees with $g$ 1\% of the time. 
\end{itemize}

While none of this is easy or straightforward, the first part is the most difficult -- indeed, it is the most difficult part of the entire proof of \textcite{Ma18}, and is where the most significant quantitative loss occurs. The proofs of the second two parts are more algebraic, especially the third, which is where explicit nilpolynomials have to be constructed. These proofs are long and technical, with many new difficulties arising that Manners has to overcome, but they also have much in common with the kind of explicit nilmanifold constructions that have appeared before in the literature.

To keep the length of our exposition bounded, therefore, we will not discuss the proofs of the latter two points any further. This affords us space to focus on the new \emph{combinatorial} ideas introduced by Manners, and in particular the definition of a `generalised cocycle', which is a generalisation of the cocycle condition \eqref{cocycle} heavily used in previous approaches to the inverse theorem, and which we expect to play an important role in the future of the field.

\subsection{Cocycles}
We begin by recalling the cocycle property of the derivative \eqref{cocycle}. We can rewrite this using the cube notation. For example, in dimension 2, if $c=(x,h_1,h_2)$ is a cube and $u\in H$ then
\begin{align*}
\partial f(c) 
&= f(x)-f(x+h_1)-f(x+h_2)+f(x+h_1+h_2)\\
& = \brac{f(x)-f(x+h_1)-f(x+h_2+u)+f(x+h_1+h_2+u)} \\
&\,\,\,- \brac{f(x+h_2)-f(x+h_1+h_2)-f(x+h_2+u)+f(x+h_1+h_2+u)}\\
&= \partial f(c^u)- \partial f(c_u)
\end{align*}
where $c^u=(x,h_1,h_2+u)$ and $c_u=(x+h_2,h_1,u)$. The relationship between the cubes $c,c^u,c_u$ can be viewed geometrically as seen in Figure~\ref{figco}.

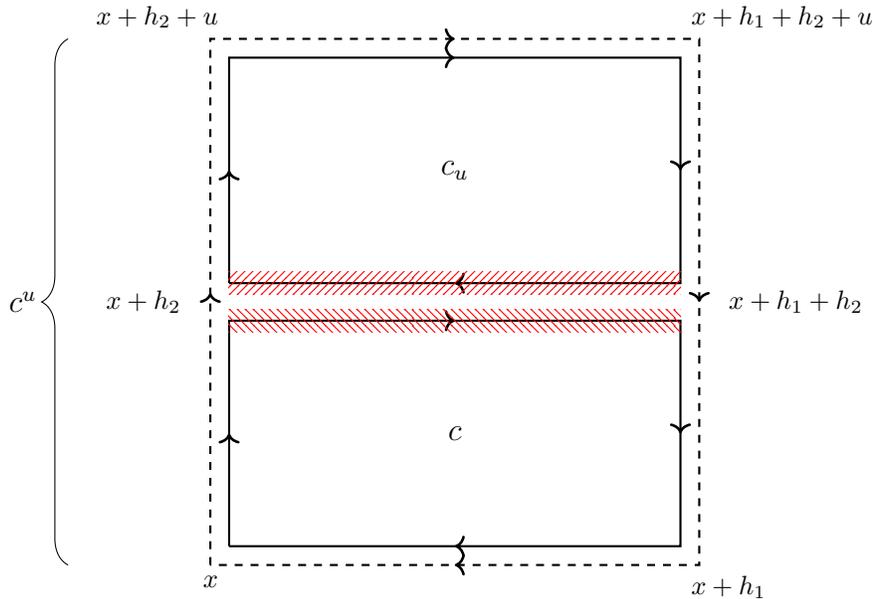
\begin{figure}[h]\label{figco}
\begin{tikzpicture}

\begin{scope}[decoration={markings,
mark=at position 1.5cm with {\arrow[line width=1pt]{>}},
mark=at position 6cm with {\arrow[line width=1pt]{>}},
mark=at position 10.5cm with {\arrow[line width=1pt]{>}},
mark=at position 15cm with {\arrow[line width=1pt]{>}}
}]
    
\path[draw,line width=0.8pt,postaction=decorate] (-2,0.5) -- (-2,3.5) -- (4,3.5) -- (4,0.5) -- (-2,0.5);
\end{scope}

\begin{scope}[decoration={markings,
mark=at position 1.5cm with {\arrow[line width=1pt]{>}},
mark=at position 6cm with {\arrow[line width=1pt]{>}},
mark=at position 10.5cm with {\arrow[line width=1pt]{>}},
mark=at position 15cm with {\arrow[line width=1pt]{>}}
}]
    
\path[draw,line width=0.8pt,postaction=decorate] (-2,-3) -- (-2,0) -- (4,0) -- (4,-3) -- (-2,-3);
\end{scope}

\begin{scope}[decoration={markings,
mark=at position 3.62cm with {\arrow[line width=1pt]{>}},
mark=at position 10.25cm with {\arrow[line width=1pt]{>}},
mark=at position 17cm with {\arrow[line width=1pt]{>}},
mark=at position 23.75cm with {\arrow[line width=1pt]{>}},
}]
    
\path[draw,line width=0.8pt,dashed,postaction=decorate] (-2.25,-3.25) -- (-2.25,3.75) -- (4.25,3.75) -- (4.25,-3.25) -- (-2.25,-3.25);
\end{scope}

\fill[pattern=north west lines, pattern color=red] (-2,-0.15) rectangle (4,0.15);

\fill[pattern=north east lines, pattern color=red] (-2,0.35) rectangle (4,0.65);

\node[below right] at (4,-3.25) {\small $x+h_1$};
\node[below left] at (-2,-3.25) {\small $x$};
\node[left] at (-2.5,0.25) {\small $x+h_2$};
\node[above left] at (-2,3.75) {\small $x+h_2+u$};
\node[above right] at (4,3.75) {\small $x+h_1+h_2+u$};
\node[right] at (4.5,0.25) {\small $x+h_1+h_2$};
\node at (1,2) {$c_u$};
\node at (1,-1.5) {$c$};

\draw [decorate,decoration={brace,amplitude=10pt},xshift=-4pt,yshift=0pt]
(-4,-3.25) -- (-4,3.75) node [black,midway,xshift=-0.6cm] 
{$c^u$};
\end{tikzpicture}
\caption{The relationship between the cubes $c$, $c^u$, and $c_u$. With the natural orientations we can `glue' $c$ and $c_u$ together to form $c^u$, so that $c=c^u-c_u$. The cocycle property is that $\partial$ respects this relationship and $\partial f(c)=\partial f(c^u)-\partial f(c_u)$ for any function $f$.}
\end{figure}

Generalising to arbitrary dimensions, if $c=(x,h_1,\ldots,h_k)$ and $1\leq i\leq k$ then we define the two related cubes 
\begin{align*}
c^u &= (x; h_1,h_2,\ldots,h_{i-1},h_i+u,h_{i+1},\ldots,h_k)\\
c_u &= (x+h_i; h_1,h_2,\ldots,h_{i-1},u,h_{i+1},\ldots,h_k).
\end{align*}
That is, both are $k$-dimensional cubes, where the only changes have been to the basepoint $x$ and the component in direction $i$. Both are of the form $(x',h_1,\ldots,h_i',\ldots,h_k)$ such that $(x+h_i)-(x'+h_i')=u$. Note that this definition is dependent on the choice of direction $i$, which we suppress for notational ease, and which will always be clear from context. 

As in the 2-dimensional case it is a trivial matter of expanding out the definitions to check that, for any function $f$, cube $c\in H^{k+1}$, and $u\in H$, the cocycle identity $\partial f(c)=\partial f(c^u)-\partial f(c_u)$ holds. We call functions on the space of cubes which satisfy the cocycle identity for every cube $c\in H^{k+1}$ and $u\in H$ cocycles (so that, for any $f$, the derivative $\partial f$ is always a cocycle). We will need a robust version of this concept, where the cocycle identity may only hold for 99\% of all possible cubes, giving the following definition.

\begin{definition}[Cocycles]
Let $\delta>0$ and $A$ be an abelian group. If $S\subset H^{k+1}$ is a set of cubes then a function $\rho:S\to A$ is a $k$-cocycle on $S$ with loss $\delta$ if, for every direction $1\leq i\leq k$, the cocycle identity
\[\rho(c) = \rho(c^u) - \rho(c_u)\]
is true for all but at most $\delta \lvert H\rvert^{k+2}$ pairs $(\bvec{c};u)\in H^{k+2}$ such that $c,c^u,c_u\in S$.
\end{definition}

In particular, the previous discussion shows that any derivative $\partial f$ (of order $k$) is a $k$-cocycle.  Remarkably, the converse is also essentially true -- one can view this `algebraic' inverse result as the seed from which our titular `combinatorial' inverse results grow.

\begin{lemma}\label{cocycleinv}
If $S_0,\ldots,S_{k}$ is a system of cubes which are 99\% dense and $\rho:S_k\to\mathbb{Z}^d$ is a $k$-cocycle with 1\% loss on $S_k$ then there exists $\lambda:H\to\mathbb{R}^d$ with $\partial \lambda: H^{k+1}\to \bbz^d$ such that
\begin{enumerate}
\item $\partial\lambda$ agrees with $\rho$ for 99\% of cubes in $S_k$ and
\item $\sup_{x\in H}\| \lambda(x)\|_1 \ll_k \sup_{c\in S_k}\| \rho(c)\|_1 +d$.
\end{enumerate}
\end{lemma}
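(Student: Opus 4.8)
The plan is to reconstruct a `primitive' $\lambda$ for the cocycle $\rho$ by a direct formula, mimicking the way a genuine derivative $\partial\lambda$ determines $\lambda$ up to lower-dimensional data, and then to clean up the inevitable errors using the $99\%$ density of the system of cubes. Concretely, the cocycle identity $\rho(c)=\rho(c^u)-\rho(c_u)$ in direction $i$ says that the value of $\rho$ on a cube, once we freeze all directions except $i$, behaves \emph{additively} in the $i$-th edge: if we write $\rho$ as a function of the $i$-th direction variable (with the rest of the cube as parameters) it is, up to the $1\%$ of bad tuples, a homomorphism-like object. Iterating over all $k$ directions, a $k$-cocycle is morally $\partial$ of a function of $k$ fewer variables; running this reduction down to a $0$-cocycle produces a function $\lambda:H\to\bbr^d$ with $\partial\lambda$ matching $\rho$. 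The first step, then, is to set up this descent carefully: fix a direction, say $i=k$, and for a generic choice of the lower-dimensional `base' cube use the additivity in the last edge to define a candidate value; the output of one stage is a $(k-1)$-cocycle (with a slightly larger loss) on the induced system $S_0,\dots,S_{k-1}$, and one recurses.

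The second step is to handle the fact that `additive $99\%$ of the time' is not literally additive. Here the natural tool is a Fourier-analytic / combinatorial averaging argument in the spirit of the $U^2$-inverse theorem sketched in the introduction: given that $\rho(c^u)-\rho(c_u)$ equals $\rho(c)$ for all but $\ll\epsilon|H|^{k+2}$ tuples, one uses a popularity / pigeonhole step (choose the base parameters so that the additive relation holds for $\gg(1-O(\sqrt\epsilon))$ of the remaining variables) and then a Bogolyubov-type or majority-vote argument to replace the approximately additive function by a genuinely additive one agreeing with it on $99\%$ of inputs. Because the target group is $\bbz^d$ (torsion-free) and we only need agreement on $99\%$ of \emph{cubes in the system}, not everywhere, this is exactly the regime where the loss stays multiplicative in $\epsilon$ and additive in $d$ — one runs the $d$ coordinates separately, which is where the $+d$ in the bound comes from. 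Throughout, one repeatedly invokes the defining properties of a system of cubes: symmetry under coordinate permutations and reflections lets us pick whichever direction is convenient and argue symmetrically, closure under faces guarantees the induced lower-level systems are again systems of cubes, and the extension property (condition (3)) is what makes the `generic base cube' pigeonholing legitimate, since a positive-density set of base cubes extends in $\gg\delta|H|$ ways.

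The third step is bookkeeping: track how the loss degrades at each of the $k$ stages of the descent (each stage costs a constant factor and possibly a $\sqrt{\cdot}$, so after $k$ stages the final loss is still $1\%$ provided we started with the loss small enough, absorbing this into the $\ll_k$), and track the $\ell^1$-size of the reconstructed $\lambda$: at each stage the new function is built from bounded integer combinations of values of $\rho$, with the number of summands controlled by $k$, giving $\sup_x\|\lambda(x)\|_1\ll_k \sup_c\|\rho(c)\|_1+d$. Finally one checks $\partial\lambda$ is integer-valued by construction (it is assembled from differences of the integer-valued $\rho$), establishing conclusion (2) and the integrality claim. The main obstacle I expect is the second step — promoting `approximately additive' to `genuinely additive with $99\%$ agreement' while keeping the loss linear in $\epsilon$ and the complexity bound clean — together with the subtlety that the descent must be carried out over a shrinking family of cubes, so one must verify at each stage that enough of the system survives for the next pigeonholing to work; this is precisely where the system-of-cubes axioms must be used with care rather than treated as a black box.
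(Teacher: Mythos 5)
Your strategy is viable but organised differently from the paper's. The paper (following Manners) writes the primitive down in one shot, $\lambda(x)=\bbe_{\bvec{h}\in H^k}\rho(x;\bvec{h})$, and verifies $\partial\lambda=\rho$ by a single direct computation with the cocycle identities, followed by a combinatorial smoothing step for the 99\% version. Your proposal integrates one direction at a time, producing intermediate $(k-1)$-cocycles; unwound, this is the same average computed as an iterated expectation. The descent has the advantage of isolating the base case $k=1$ (where a $1$-cocycle is literally a coboundary), but it forces you to re-verify the cocycle identities in the surviving directions at every stage and to control a compounding loss, whereas the closed-form average does all of that bookkeeping at once. One misdirection: the Fourier-analytic / Bogolyubov machinery you invoke belongs to the 1\%-regime; here everything is a 99\%-statement with values in $\bbz^d$, and the right tool is pure majority vote (for most $x$, the value predicted for $\lambda(x)$ by the cocycle identity is independent of the auxiliary parameters for most choices of them), which is what the paper's ``combinatorial smoothing'' amounts to.

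The genuine gap is your last sentence: the claim that $\partial\lambda:H^{k+1}\to\bbz^d$ holds \emph{everywhere} does not follow ``by construction''. On any route, $\lambda$ is an average of integer-valued quantities and is therefore genuinely real-valued; at a cube $c$ outside the good 99\%, $\partial\lambda(c)$ is an alternating sum of such averages, and there is no a priori reason for the average to collapse to an integer there. This is precisely the point the paper flags immediately after the lemma (``even where $\partial\lambda$ does not agree with $\rho$, the lemma guarantees that it is still an integer-valued function''), and it is what makes $\lambda$ a polynomial modulo $\bbz$ rather than merely a function whose derivative usually equals $\rho$. One way to repair this is to first upgrade the 99\%-cocycle to an exact cocycle $\tilde\rho$ defined on all of $H^{k+1}$ and agreeing with $\rho$ on 99\% of $S_k$ (this is where the majority-vote argument and the extension property of the cube system do real work), and only then take the primitive, so that $\partial\lambda=\tilde\rho$ holds identically and integrality is inherited everywhere. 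Relatedly, your explanation of the $+d$ term (``run the $d$ coordinates separately'') does not account for an \emph{additive} $d$; that term comes from the $O(1)$-per-coordinate corrections incurred in this smoothing step, not from summing over coordinates.
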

It is important to note that, even where $\partial \lambda$ does not agree with $\rho$, the lemma guarantees that it is still an integer-valued function, and hence $\lambda$ behaves like a polynomial of degree $k$ `modulo $\bbz$'.
\begin{proof}[Sketch proof]
The key observation is that if we let 
\[\lambda(x) = \bbe_{\bvec{h}\in H^k} \rho(x;\bvec{h}),\]
then a lengthy but routine calculation using the definitions proves that $\partial \lambda=\rho$, at least when $\rho$ is a 100\% cocycle.  Some highly non-trivial combinatorial smoothing allows one to show that this identity still holds 99\% of the time when $\rho$ is a 99\% cocycle.
\end{proof}

\subsection{Generalised cocycles}
To motivate the introduction of generalised cocycles, we first sketch how the algebraic inverse result of the previous section may be applied to obtain a combinatorial inverse result when $s=1$.

Recall that our starting point is some function $f$ with 99\% structure, quantified by being at the top of a $(D,M)$-polynomial hierarchy of height $s=1$, say with coefficient function $b:H^3\times\{0,1\}^2\to \bbz^D$. Our goal is to show that the coefficient function $b(c,\omega)$ behaves like the carry bits from a bracket linear function, which is enough for an inverse result as mentioned in the introduction to this section. 

Unpacking the definitions (and assuming all 99\% objects are in fact 100\% objects for simplicity), we have a constant function $f_0:H\to \mathbb{R}^{D}$ such that for all $c\in H^{3}$, 
\begin{align*}
\partial f(c) &= \sum_{\omega_1,\omega_2\in \{0,1\}}(-1)^{\omega_0+\omega_1} f(x+\omega_1h_1+\omega_2h_2)\\
&= (b(c,00)-b(c,01)-b(c,10)+b(c,11))\cdot f_0.
\end{align*}
Since $f_0$ is constant, we can consolidate the first factor into a single function $B:H^3\to \mathbb{Z}^{D}$, expressing the right-hand side as $B(c)\cdot f_0$. In particular, when $s=1$, we can assume that the coordinate function does not depend on $\omega$ (note that the same consolidation takes place for the derivative of a bracket linear function). From the definition alone, the only knowledge we have about the function $B$ is that is takes on values which are $L^1$-bounded (by the parameter $M$). 

Since $B(c)\cdot f_0$ is a derivative, the cocycle identity must hold. If we can `invert' the $\cdot f_0$ then $B$ must itself satisfy the cocycle identity. By the algebraic inverse result Lemma~\ref{cocycleinv}, $B$ must itself, essentially, be a derivative, and so there is some $f':H\to \mathbb{R}^{D}$ such that $\partial f':H^3\to \bbz^D$ (so that $f'$ is like a linear polynomial modulo $\bbz$) and $\partial f = (\partial f')\cdot f_0$. This is still rather weak algebraic structure, but it is enough to be bootstrapped into constructing a genuine nilpolynomial which agrees with $f$ 1\% of the time. At this point the arguments become more algebraic than combinatorial, and as mentioned above, will not be discussed any further in this article.

Instead, we will take that final algebraic step for granted, and view obtaining information like `the coefficients $b$ of the polynomial hierarchy are themselves derivatives of some other function, which has some polynomial-like structure' as our goal. We will now indicate how the previous discussion generalises to higher $s\geq 2$. The key input when $s=1$ was that a function closely related to the coefficients $b$ satisfied the cocycle condition, since it is a derivative, and so we could apply the algebraic inverse result Lemma~\ref{cocycleinv}. For the more general argument, we must find some property of the coefficients $b$ that follows from the fact that they are related to a derivative, which satisfies the cocycle condition. This property Manners calls a `generalised cocycle condition', and can be derived heuristically as follows.

Suppose that $f$ sits on top of a polynomial hierarchy of height $s$, so that  for any $c= (x,\bvec{h})\in S_{s+1}$, 
\[\partial f(c)=\sum_{\omega\in\{0,1\}^{s+1}}(-1)^{\abs{\omega}}b(c,\omega) \cdot \bvec{f}_{\leq s-1}(x+\omega\cdot \bvec{h}).\]
We now see what this identity combined with the cocycle property of derivatives gives us. Let $(c,u)\in H^{s+3}$ be such that $c^u,c_u\in S_{s+1}$ (we fix an arbitrary direction $1\leq i\leq s+1$). The cocycle identity implies that
\[\partial f(c)=\sum_{\omega\in\{0,1\}^{s+1}}(-1)^{\abs{\omega}}\brac{b(c^u,\omega)\cdot\bvec{f}_{\leq s-1}(x+\omega\cdot \bvec{h}^u)-b(c_u,\omega)\cdot\bvec{f}_{\leq s-1}(x+h_i+\omega\cdot \bvec{h}_u)}.\]
If we let
\begin{equation}\label{bdef}
\mathfrak{b}(c,u,\omega)=\begin{cases} b(c^u,\omega)&\textrm{ if }\omega_i=0\textrm{ and}\\ b(c_u,\omega\backslash \{i\})&\textrm{ if }\omega_i=1\end{cases}
\end{equation}
then we can regroup this as
\begin{align*}
\partial f(c) &=\sum_{\omega\in\{0,1\}^{s+1}}(-1)^{\abs{\omega}}\mathfrak{b}(c,u,\omega)\cdot \bvec{f}_{\leq s-1}(x+\omega\cdot \bvec{h}) \\
&\hspace{6em} + \sum_{\substack{\omega\in \{0,1\}^{s+1}\\ i\in \omega}}(-1)^{\abs{\omega}}(b(c^u,\omega)-b(c_u,\omega))\cdot\bvec{f}_{\leq s-1}(x+\omega\cdot \bvec{h}).
\end{align*}
If $b(c^u,\omega)=b(c_u,\omega)$, then the second sum vanishes and
\[\sum_{\omega\in\{0,1\}^{s+1}}(-1)^{\abs{\omega}}b(c,\omega)\cdot \bvec{f}_{\leq s-1}(x+\omega\cdot \bvec{h})=\sum_{\omega\in\{0,1\}^{s+1}}(-1)^{\abs{\omega}} \mathfrak{b}(c,u,\omega)\cdot\bvec{f}_{\leq s-1}(x+\omega\cdot \bvec{h}) .\]
Comparing coefficients we may then hope that $b(c,\omega)=\mathfrak{b}(c,u,\omega)$. This, roughly, is the idea of the `generalised cocycle' condition. 

There are two technical matters to clarify before we give the precise definition of a generalised cocycle. The first is that in the final step we illegitimately concluded from `comparing coefficients' that $b(c,\omega)=\mathfrak{b}(c,u,\omega)$ for fixed $c,u,\omega$. The second is that we assumed that $b(c^u,\omega)=b(c_u,\omega)$. Neither of these are, in general, true, but can be recovered after some work if for each fixed cube $c$ the functions $b(c,\cdot)$ enjoy a special kind of structure, namely being in `normal form'. 
\begin{definition}[Normal form]
Let $t\leq k$. A function $b:\{0,1\}^k \to \bbz^{d_0+\cdots+d_t}$ is in normal form if whenever $\abs{\omega}=\omega_1+\cdots+\omega_k=j$ the first $d_{0}+\cdots+d_{j-1}$ coordinates of $b(\omega)$ are all zero. 
\end{definition}
Although we can certainly not assume that the coefficient function $b$ is in normal form itself, it can be transformed into one, incurring only a small cost. It is an easy exercise to check that, for any $b:\{0,1\}^k\to \bbz^d$, the function
\begin{equation}\label{ztrans}
b'(\omega')=\sum_{\omega\in\{0,1\}^k}Z_r(\omega,\omega')b(\omega)
\end{equation}
vanishes whenever $\abs{\omega'}>r$, where\footnote{We identify vectors in $\{0,1\}^k$ with subsets of $\{1,\ldots,k\}$ when we say $\omega\supset \eta\supset\omega'$.}
\[Z_r(\omega,\omega')=\sum_{\substack{\eta\in \{0,1\}^k\\ \omega\supset \eta \supset \omega'\\ \abs{\eta}\leq r}} (-1)^{\abs{\omega}-\abs{\eta}}.\]
By applying this transformation to each level of the coefficient function in a polynomial hierarchy, and keeping track of the constants using the fact that $\abs{Z_r}=O_s(1)$, we obtain the following lemma, which lets us assume that the coefficient function is in normal form.

\begin{lemma}
If $g:X\to\bbr$ sits at the top of a polynomial hierarchy  of height $s$ with an associated coefficient function $b:S_{s+1}\times\{0,1\}^{s+1}\to \bbz^{d_0+\cdots+d_{s-1}}$ then we can assume that the function $b(c,\cdot)$ is in normal form for every $c\in S$, at the cost of replacing the $M$ parameter by some $O_s(M)$.
\end{lemma}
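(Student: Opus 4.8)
The plan is to let the transformation~\eqref{ztrans} do essentially all the work: apply it, one coordinate block at a time, to the coefficient function $b$ attached to $g$, and then check that the outcome is still the coefficient function of a polynomial hierarchy topped by $g$, paying only an $O_s(1)$ factor since $\abs{Z_r}=O_s(1)$.

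Write the hypothesis as $\partial g(c)=\sum_{\omega\in\{0,1\}^{s+1}}(-1)^{\abs\omega}b(c,\omega)\cdot\bvec f_{\leq s-1}(x+\omega\cdot\bvec h)$ for $c=(x,\bvec h)\in S_{s+1}$, where $\bvec f=(f_i\colon X\to\bbr^{d_i})_{0\leq i\leq s-1}$ is the underlying height-$(s-1)$ hierarchy, and split $b(c,\omega)$ into its level blocks $b(c,\omega)_i\in\bbz^{d_i}$, the $i$-th block being the one paired with $f_i$. First I would process the blocks from the top down: on reaching level $i$, replace the (current) $i$-th block by its $Z_i$-transform $b'(c,\cdot)_i\colon\omega'\mapsto\sum_{\omega}Z_i(\omega,\omega')b(c,\omega)_i$, i.e.\ \eqref{ztrans} with truncation parameter $r=i$. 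By the exercise quoted above this makes the $i$-th block vanish on $\{\abs{\omega'}>i\}$; doing it for every $i$ leaves a coefficient function whose first $d_0+\cdots+d_{j-1}$ coordinates vanish on $\{\abs\omega=j\}$ for every $j$, which is exactly normal form. Each new coefficient is a $\bbz$-combination of at most $2^{s+1}$ of the old ones, with weights $Z_i$ of size at most $2^{s+1}$, so the entries stay integral with $\ell^1$-norm $O_s(M)$.

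It remains to check that replacing a block by its $Z_i$-transform alters the right-hand side only by something absorbable into the lower blocks. Here I would invoke the elementary identity behind~\eqref{ztrans}, which for the cube $c$ and the function $f_i$ reads
\[
\sum_{\omega}(-1)^{\abs\omega}\big(b(c,\omega)_i-b'(c,\omega)_i\big)\cdot f_i(x+\omega\cdot\bvec h)=\sum_{\omega}\ \sum_{\substack{\eta\subseteq\omega\\ \abs\eta>i}}(-1)^{\abs\omega-\abs\eta}\,b(c,\omega)_i\cdot\partial f_i(c|_{\eta}),
\]
where $c|_\eta=(x;(h_j)_{j\in\eta})$ is the $\eta$-face of $c$ based at $x$, and the right-hand side vanishes whenever $f_i$ restricted to $c$ is a polynomial of degree $\leq i$ in $\omega$. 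Thus the change is an $O_s(M)$-bounded integer combination of the values of $\partial f_i$ on faces of $c$ of dimension $\geq i+1$; since a system of cubes is closed under faces, the $(\bvec f,i+1,i,M)$-derivatives condition satisfied by $f_i$ — iterated for faces of dimension $>i+1$ — rewrites each such value as a bounded integer combination of $\bvec f_{\leq i-1}$ evaluated at faces of $c$. But $\bvec f_{\leq i-1}$ is precisely the list of blocks below level $i$, so this change is an admissible extra contribution to blocks $0,\dots,i-1$ at the relevant faces, and I would fold it in there, before those blocks are themselves normalised at the later stages of the downward sweep. Since $f_0$ is constant (so level $0$ produces no change at all) and each step only ever touches strictly lower blocks, the sweep terminates, and $g$ sits at the top of a polynomial hierarchy on the same system of cubes, with the same $\bvec f$, whose top coefficient function is in normal form and whose complexity is $O_s(M)$. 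Running the identical argument on each coefficient function lower in the hierarchy normalises those as well.

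The only point that genuinely needs care is the previous paragraph: checking that the change manufactured by $Z_i$ really is supported on strictly lower levels and really does stay $O_s(M)$-bounded, and that the downward sweep neither spoils a normalisation already made at a lower level nor compounds the loss across the $\leq s$ steps. Granting the combinatorial identity displayed above (routine, but bookkeeping-heavy), what is left is the organisation of this induction, for which we refer to \textcite{Ma18}.
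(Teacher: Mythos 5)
Your proposal is correct in substance and follows exactly the route the paper gestures at (which offers only the one-sentence justification ``apply \eqref{ztrans} to each level and track constants via $\abs{Z_r}=O_s(1)$''); indeed you are more careful than the paper in spelling out the one non-trivial point, namely that the $Z_i$-transform of block $i$ changes the right-hand side by a combination of $\partial f_i$ on faces of dimension $>i$, which must then be rewritten via the derivatives condition of $f_i$ and folded into the blocks below. The only caveat is quantitative: that folding multiplies the current bound on block $i$ by the hierarchy complexity $M$ at each stage of the downward sweep, so the argument as written yields a final complexity of $M^{O_s(1)}$ rather than the stated $O_s(M)$ --- a discrepancy that is harmless for the application (all losses in the overall proof are polynomial) but which the paper's one-line accounting, counting only the $O_s(1)$ factor from $Z_r$ itself, glosses over.
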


We can therefore assume that $b$ is in normal form. We would still need to know that $\mathfrak{b}$ is in normal form also, but we can get around this by applying the transform \eqref{ztrans} to $\mathfrak{b}$, putting it into something like normal form. 

Before we define a generalised cocycle we need to quantify the condition that $b(c^u,\omega)=b(c_u,\omega)$ for `most' $c$ and $u$ that was used in our heuristic discussion. 
\begin{definition}
If $S\subset H^{k+1}$ is a set of $k$-dimensional cubes then $b:S\times\{0,1\}^k\to\bbz^d$ is $\delta$-almost upper compatible on $S$ if, for every direction $1\leq i\leq k$ and $\omega\in \{0,1\}^k$ such that $\omega_i=1$, the identity
\[b(c^u,\omega)=b(c_u,\omega)\]
holds for all but at most $\delta\abs{H}^{k+2}$ pairs $(c,u)\in H^{k+2}$ such that $c^u,c_u\in S$. 
\end{definition}
As mentioned above, generically we expect the coefficients $b$ produced in the proof to be upper compatible, so we will not dwell on this issue here. The case when they are not upper compatible cannot be ruled out, and adds yet more technicalities, but it is somewhat pathological, and can be dealt with by other means.

We now come to the crucial definition, that of a generalised cocycle. This definition is informed by our heuristic discussion above, the idea being that it captures what the cocycle identity implies for the coefficient functions of a polynomial hierarchy.

\begin{definition}[Generalised cocycle]
Let $b:S\times \{0,1\}^k\to\bbz^d$ be a function in normal form. If $0\leq r<k$ and $S\subset H^{k+1}$ is a set of cubes then we say that $b$ is a generalised $k$-cocycle of type $r$ and loss $\delta>0$ on $S$ if
\begin{enumerate}
\item $b$ is $\delta$-almost upper compatible on $S$ and
\item in every direction $1\leq i\leq k$, for all but at most $\delta \abs{H}^{k+2}$ many $(c,u)\in H^{k+2}$ such that $c^u,c_u\in S$, for all $\omega\in \{0,1\}^k$, 
\[b(c,\omega)= \tilde{\mathfrak{b}}_r(c,u,\omega')\]
where $\mathfrak{b}$ is given by \eqref{bdef} and the transform $\tilde{\mathfrak{b}}_r$ was defined in \eqref{ztrans}.
\end{enumerate}
\end{definition}

There is a strong structural result available for generalised cocycles, just as there is for cocycles. Indeed, one can check that if $\lambda$ is a `polynomial modulo $\bbz$' of degree $k-r-1$ (in that $\partial \lambda(c)$ vanishes modulo $\bbz$ for all cubes of dimension $k-r$) then, for any $c=(x,\bvec{h})\in H^{k+1}$, the function $b(\omega)=\lambda(x+\omega\cdot \bvec{h})$ is a generalised $k$-cocycle of type $r$ (at least, after passing to a normal form of $b$ as in \eqref{ztrans}). The following lemma says that the converse holds. 

\begin{lemma}
Let $S_0,\ldots,S_k$ be a system of cubes with 99\% density and $b:S_{k}\times \{0,1\}^k\to \bbz^d$ be a generalised $k$-cocycle of type $r$ and 1\% loss. 

There is a function $\lambda:H\to \bbr^d$ such that, if $\Lambda(c,\omega)$ is the transformation (as defined in \eqref{ztrans}) of the function $(c,\omega)\mapsto \lambda(x+\omega\cdot \bvec{h})$, then
\begin{enumerate}
\item $b(c,\omega)=\Lambda(c,\omega)$ for all $\omega\in\{0,1\}^k$ for 99\% of cubes  $c\in S_k$,
\item $\partial \lambda (c)\in \bbz^d$ for all $c\in H^{k-r+1}$, and
\item $\sup_{x\in H}\norm{\lambda(x)}_1 \ll \sup_{c\in S_k} \norm{b(c)}_1+d$.
\end{enumerate}
\end{lemma}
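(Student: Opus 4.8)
This is the generalised analogue of the algebraic inverse result Lemma~\ref{cocycleinv}, and the natural strategy is to reduce to it. The obstruction to applying Lemma~\ref{cocycleinv} directly is that its hypothesis is an honest cocycle identity for a function on a single space of cubes, whereas here $b(c,\omega)$ carries both a cube index $c$ and a corner index $\omega$, and its dependence on $c$ is a priori completely unconstrained. The first task is therefore to use the extra structure — $\delta$-almost-upper-compatibility together with the normal-form hypothesis — to collapse this double dependence down to a dependence on the projected points alone.

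First I would exploit almost-upper-compatibility: the identity $b(c^u,\omega)=b(c_u,\omega)$ for $\omega_i=1$ says that, in each active direction, the value $b(c,\omega)$ is invariant under sliding the basepoint and that direction while fixing the projected point $x+\omega\cdot\bvec{h}$. A popularity argument — discarding $O_k(\delta)\abs{H}$-many bad slices and iterating over the $|\omega|$ active directions — then shows that for $99\%$ of cubes $b(c,\omega)$ depends only on $x+\omega\cdot\bvec{h}$ and the inactive directions; combined with the normal-form hypothesis, which kills the level $<|\omega|$ blocks of $b(c,\omega)$, this extracts functions $\mu_j:H\to\bbz^{d_j}$ so that the level-$j$ block of $b(c,\omega)$ equals $\mu_j(x+\omega\cdot\bvec{h})$ for $j\geq|\omega|$, valid $99\%$ of the time. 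In this language the second clause of the generalised cocycle definition, $b(c,\omega)=\tilde{\mathfrak{b}}_r(c,u,\omega')$, becomes (after the same bookkeeping with the transform \eqref{ztrans}) an approximate cocycle identity for a single integer-valued function $\rho$ assembled from the $\mu_j$, the point being that $\rho=\partial\mu$ at the scale of $(k-r)$-dimensional cubes, so that it is a $(k-r)$-cocycle with $1\%$ loss rather than a $k$-cocycle — the parameter $r$ recording exactly this drop in order.

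Next I would feed $\rho$ into Lemma~\ref{cocycleinv}, applied to the $99\%$-dense subsystem $S_0,\ldots,S_{k-r}$ of lower-dimensional faces, obtaining $\lambda:H\to\bbr^d$ with $\partial\lambda$ integer-valued on all of $H^{k-r+1}$, agreeing with $\rho$ on $99\%$ of cubes, and with $\sup_x\norm{\lambda(x)}_1\ll_k\sup_c\norm{\rho(c)}_1+d\ll_k\sup_c\norm{b(c)}_1+d$. This gives clauses (2) and (3) immediately. For clause (1) one must check that running $\lambda$ back through the construction recovers $b$: that the normal-form transform \eqref{ztrans} (with parameter $r$) of $(c,\omega)\mapsto\lambda(x+\omega\cdot\bvec{h})$ equals $b(c,\omega)$ for $99\%$ of $c$. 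This is the converse computation noted before the lemma — applying \eqref{ztrans} to $\lambda(x+\omega\cdot\bvec{h})$ with $\lambda$ a degree-$(k-r-1)$ polynomial modulo $\bbz$ returns a generalised $k$-cocycle of type $r$ — combined with the fact that, by the first step, $b$ and this transform are now built from the \emph{same} point data $(\mu_j)$ up to a $1\%$ error.

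The main obstacle is, exactly as in Lemma~\ref{cocycleinv} itself, the passage from exact to approximate identities: each of the three steps above holds cleanly only when all the $99\%$ hypotheses are in fact $100\%$, and upgrading each to hold off a genuinely small exceptional set requires the same \enquote{highly non-trivial combinatorial smoothing} invoked for Lemma~\ref{cocycleinv}, here further complicated by the iterated popularity reductions of the first step. Arranging all of this so that the total loss stays $O_k(\delta)$ and the $\ell^1$ sizes do not drift is where the real work lies; keeping the normal forms of $b$ and of $\mathfrak{b}$ aligned via \eqref{ztrans} throughout is what makes the accounting tractable.
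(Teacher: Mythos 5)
The paper states this lemma without proof (deferring entirely to Manners), so your proposal can only be measured against the strategy the surrounding text signals, and on that score your architecture is the right one: reduce to the algebraic inverse result Lemma~\ref{cocycleinv} by collapsing the dependence of $b(c,\omega)$ on the cube $c$ to a dependence on the projected point, reading the generalised-cocycle identity as an honest cocycle identity in dimension $k-r$, and then integrating (via $\lambda(x)=\bbe_{\bvec{h}}\rho(x,\bvec{h})$) to produce $\lambda$; clauses (2) and (3) then do come out of Lemma~\ref{cocycleinv} as you say. Your orbit analysis of upper compatibility is also correct: the moves $(x,h_i)\mapsto(x+a,h_i-a)$ in active directions $i\in\omega$ preserve the corner $x+\omega\cdot\bvec{h}$ and all inactive components and act transitively on the rest, so almost-upper-compatibility forces $b(c,\omega)$ to depend, $99\%$ of the time, only on the corner \emph{and} on $(h_j)_{j\notin\omega}$.

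The gap is in the very next step. You dispose of the residual dependence on the inactive directions by invoking the normal-form hypothesis, but normal form only dictates which coordinate blocks of $b(c,\cdot)$ vanish as a function of $\abs{\omega}$; it says nothing about how the surviving blocks vary with $(h_j)_{j\notin\omega}$, so the point functions $\mu_j:H\to\bbz^{d_j}$ you extract are not yet well defined. Removing that dependence is exactly what the second clause of the generalised-cocycle definition is for: taking $u$ in an \emph{inactive} direction ($\omega_i=0$), the identity $b(c,\omega)=\tilde{\mathfrak{b}}_r(c,u,\omega)$ with $\mathfrak{b}$ as in \eqref{bdef} compares $b$ at $c$ with $b$ at $c^u$, whose $i$-th component is $h_i+u$ but whose $\omega$-corner is unchanged; this is also where the type parameter $r$ and the transform \eqref{ztrans} first bite. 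Your sketch instead deploys the second clause only afterwards, to manufacture the cocycle $\rho$, and even there the assertion that the $Z_r$-transformed identity assembles into the $(k-r)$-dimensional relation $\rho(c)=\rho(c^u)-\rho(c_u)$ is stated rather than computed. These two computations are the entire algebraic content of the lemma --- everything else is the combinatorial smoothing already conceded in Lemma~\ref{cocycleinv} --- so the proposal identifies the correct reduction but does not yet carry out the step that makes it work.
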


\subsection{Concluding the proof}
The following is a convenient package definition, which couples the derivatives condition of the previous section with the extra restriction that the coefficients are generalised cocycles. 

\begin{definition}[Strong derivatives condition]
Let $\bvec{f}=(f_i: X\to \bbr^{d_i})_{0\leq i\leq s}$ be a tuple of functions, let $0\leq k,t\leq s+1$, and $b:S\times \{0,1\}^k\to \bbz^{d_0+\cdots+d_{t-1}}$ where $S\subset H^{k+1}$ is a set of cubes.

A function $g:X\to\bbr$ satisfies the $(\bvec{f},k,t,M,\delta)$-strong derivatives condition on $S$ with coefficients $b$ if
\begin{enumerate}
\item $g$ satisfies the $(\bvec{f},k,t,M)$-derivatives condition on $S$ with coefficients $b$;
\item $b$ is in normal form;
\item if we write $b=(b_0,\ldots,b_{t-1})$ where $b_i:S\times \{0,1\}^k\to \bbz^{d_i}$ then $b_{i}$ is $\delta$-almost upper compatible on $S$ for $1\leq i\leq t-1$; and 
\item $b_{t-1}$ is a generalised $k$-cocycle of type $t-1$ and loss $\delta$ on $S$. 
\end{enumerate}
\end{definition}

From our heuristic discussion (that is, combining the cocycle property of derivatives with the definition of the derivatives condition) we expect that the coefficients within a polynomial hierarchy should have coefficients that are generalised cocycles, and in particular it should satisfy the strong derivatives condition with respect to this hierarchy. 

The following lemma is a precise statement of this fact. There are many aspects we have shrugged off or simply ignored in our heuristic discussion, such as the fact that $b$ may not be even remotely upper compatible, or the fact that all our statements hold only 99\% of the time rather than 100\% of the time. Perhaps the most significant, however, is the hierarchical nature of our structure. Unless one proceeds extremely carefully, the 99\% amount of structure enjoyed at the top level may only translate to a 1\% amount of structure on the next level down, at which point all of our heuristics cease to be even approximately true. 

To avoid this, one must take care to expand this 1\% to 99\% before we try to locate generalised cocycles. This `expansion' is probably the most lengthy and difficult part of Manners' proof. Indeed, it is also quantitatively the most costly, and is the reason there are different bounds between the cases $s\leq 3$ and $s\geq 4$ in Theorem~\ref{approx} and thence Theorem~\ref{thmain}. 
\begin{lemma}\label{lemmaend}
Let $\epsilon>0$ and let $(S_0,\ldots,S_{s+1})$ be a system of cubes inside $H$ of density $1-\epsilon$. Suppose $g:S_0\to\bbr$ sits at the top of a $(D,M)$-polynomial hierarchy of height $s$ on $S_0,\ldots,S_{s+1}$.

There exists $M'\geq 1$ and a system of cubes $(S_0',\ldots,S_{s+1}')$ of density $1-2\epsilon$ such that $S_k'\subset S_k$ and $\abs{S_k\backslash S_k'}\leq \epsilon\abs{S_k}$ for each $k$ and either
\begin{enumerate}
\item $H$ has a proper subgroup of size at least $\abs{H}/M'$, or
\item $g$ sits at the top of a $(D,M')$-polynomial hierarchy $\bvec{f'}$ of height $s$ on $S_0',\ldots,S_{s+1}'$ which is an $M'$-bounded linear combination of $\bvec{f}$, and further $g$ satisfies the $(\bvec{f},s+1,s,M',\epsilon)$-strong derivatives condition on $S_{s+1}'$,
\end{enumerate}
where 
\[M'\leq \begin{cases} O(M/\epsilon)^{O(D)}&\textrm{ if }s\leq 2\textrm{ and}\\
O(M/\delta)^{O(D)^{O(D)}}&\textrm{ if }s\geq 3.\end{cases}\]
\end{lemma}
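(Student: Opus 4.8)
The plan is to follow, level by level, the heuristic derivation given above --- deducing from the cocycle identity for derivatives that the coefficient functions of the hierarchy must themselves be generalised cocycles --- while being scrupulous about the two illegitimate steps flagged after the heuristic (the passage from non-upper-compatible to upper-compatible coefficients, and the ``comparing of coefficients''), and, crucially, about keeping all the relevant sets of cubes 99\%-dense throughout. We first invoke the normal-form lemma to assume, at the cost of replacing $M$ by $O_s(M)$, that the coefficient function $b:S_{s+1}\times\{0,1\}^{s+1}\to\bbz^{d_0+\cdots+d_{s-1}}$ attached to $g$ has $b(c,\cdot)$ in normal form for every $c$.

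Working at the top level first, we apply the exact cocycle identity $\partial g(c)=\partial g(c^u)-\partial g(c_u)$, valid for every cube $c$, every $u\in H$, and every direction $i$. Whenever $c,c^u,c_u\in S_{s+1}$ --- which holds for 99\% of pairs $(c,u)$, by density of the system --- we expand all three derivatives using the $(\bvec f,s+1,s,M)$-derivatives condition, reindex the right-hand side through the substitution $\mathfrak b$ of \eqref{bdef}, and separate off the error term $\sum_{\omega\ni i}(-1)^{\abs\omega}\bigl(b(c^u,\omega)-b(c_u,\omega)\bigr)\cdot\bvec f_{\leq s-1}(x+\omega\cdot\bvec h)$. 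If the top component $b_{s-1}$ is $\epsilon$-almost upper compatible this error vanishes for 99\% of $(c,u)$, leaving two representations of one and the same quantity as a bounded integer combination of the vertex values $\bvec f_{\leq s-1}(x+\omega\cdot\bvec h)$; applying the normalising transform \eqref{ztrans} to $\mathfrak b$ and ``comparing coefficients'' would then give exactly $b(c,\omega)=\tilde{\mathfrak b}_{s-1}(c,u,\omega)$, i.e.\ that $b_{s-1}$ is a generalised $(s+1)$-cocycle of type $s-1$. Together with almost-upper-compatibility of the lower components this is precisely the $(\bvec f,s+1,s,M',\epsilon)$-strong derivatives condition we are after at the top level.

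Making this honest requires three things, and the third is the real work. First, one must genuinely \emph{produce} the almost-upper-compatibility of the $b_i$; this is achieved by absorbing the discrepancies $b(c^u,\omega)-b(c_u,\omega)$ into modifications of the lower-level functions --- which is why the output hierarchy $\bvec{f'}$ comes out only as an $M'$-bounded linear combination of $\bvec f$ --- and this step can break down only when the relevant sets are trapped in a coset of a proper subgroup of bounded index, which is the origin of alternative (1) in the statement (and where one would use, for $H=\bbz_N$ with $N$ prime, that no such subgroup exists). Second, the ``comparing of coefficients'' must be justified: the map sending a tuple of normal-form coefficients to $\sum_\omega(-1)^{\abs\omega}(\mathrm{coeff})_\omega\cdot\bvec f_{\leq s-1}(x+\omega\cdot\bvec h)$ is not injective cube by cube, but a robust rigidity statement holds for 99\% of cubes, proved by downward induction on $\abs\omega$: normal form guarantees that coefficients of weight $j$ involve only levels $\geq j$, and the level-$t$ functions of a polynomial hierarchy satisfy their own derivatives conditions, which constrain their vertex values tightly enough to peel off one level at a time. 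Third, and most importantly, the 1\%-type conclusions that these manipulations naturally yield --- after intersecting $O(D)$ many constraints and a Cauchy--Schwarz or two --- must be boosted back up to the 99\% density needed to state the conclusion and feed it into the next level. A naive argument here collapses 99\% to 1\% and the scheme fails; Manners instead runs a delicate concatenation argument, in the spirit of the concatenation theorems for Gowers norms, costing only a bounded power of the parameters per application --- but it must be iterated through all $s$ levels of the hierarchy, and it is this iteration that produces the tower $O(M/\epsilon)^{O(D)}$ for small $s$ and the double-height tower $O(M/\epsilon)^{O(D)^{O(D)}}$ once $s\geq 3$.

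With the top level handled, each lower-level function $f_i$ of the hierarchy is itself at the top of a shorter polynomial hierarchy, so the argument recurses downward; at each stage one passes to a slightly smaller system of cubes $S_k'\subset S_k$ with $\abs{S_k\setminus S_k'}\leq\epsilon\abs{S_k}$ (the losses from pigeonholing and from the concatenation step), the density dropping overall from $1-\epsilon$ only to $1-2\epsilon$, and the complexity multiplying by at most the stated $M'$. Assembling the levels yields either a large proper subgroup of $H$, or the desired strong derivatives condition for $g$ together with the modified hierarchy $\bvec{f'}$ it refers to, as claimed. The density-expansion/concatenation step is the main obstacle by a wide margin: the remainder is a careful but essentially mechanical unwinding of the heuristic, whereas the expansion genuinely needs new combinatorial input and is, on its own, responsible for the worst bounds in Theorem~\ref{approx} and hence in Theorem~\ref{thmain}.
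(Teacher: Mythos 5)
You have the right overall architecture (make the heuristic honest; the hard part is a density-boosting step, which is where the worst bounds come from), but the key combinatorial mechanism of the crucial step is misidentified, and so is the origin of the subgroup dichotomy. In the paper's argument the attempt to extract a generalised cocycle condition at the top level fails precisely in a degenerate case: when $\partial g$ already vanishes on 1\% of $s$-dimensional cubes, so that the information one would gain about the coefficients is annihilated. The response to that degenerate case is to show that the set of $s$-dimensional cubes on which $\partial g$ vanishes is closed under two explicit operations --- gluing ($c^u,c_u\mapsto c$ via the cocycle identity) and translation ($(x,\bvec{h})\mapsto(x+u,\bvec{h})$) --- and then to invoke an additive-combinatorial structure theorem: a 1\%-dense set of cubes closed under these operations must in fact be 99\% of $H_0^{s+1}$ for some subgroup $H_0\leq H$. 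The subgroup alternative in the statement is exactly the possibility that this $H_0$ is proper; it does not come from the step of producing upper-compatibility (the paper explicitly says the non-upper-compatible case is "somewhat pathological" and dealt with by other means). Your proposal places the dichotomy in the wrong step and replaces the gluing/translation closure argument with an appeal to "concatenation theorems for Gowers norms", which is a different (and here inapplicable) mechanism: nothing in this step concerns combining uniformity-norm control in different directions.

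Relatedly, your recursion is structured differently from the paper's descent, and the difference matters. You process the top level and then recurse through each $f_i$ of the hierarchy; the paper instead descends in the degree of $g$ itself: once $\partial g$ is known to vanish on 99\% of $s$-dimensional cubes, one restarts the whole argument with $s$ replaced by $s-1$, and one must eventually win because the bottom level consists of constants. Without the degenerate-case trigger and the closure-plus-structure-theorem boost, your sketch has no way to terminate this descent or to produce the subgroup $H_0$ that the first alternative of the lemma refers to, so the dichotomy in the conclusion is not actually derived.
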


Note that if $H$ has no non-trivial subgroups (as is the case for Theorem~\ref{thmain}, when $H=\bbz_N$ for $N$ prime), then the first case cannot hold (for bounded $M'$), and hence we must be in the second case. If $H$ does have large subgroups, then we cannot hope to iteratively apply Lemma~\ref{lemmaend}, and hence the strategy of Manners breaks down. This is a significant restriction on the work of Manners, and in particular means that this approach cannot be used for inverse theorems over groups with many subgroups, such as $\mathbb{F}_p^n$ (for which see the recent alternative approach of \textcite{GM17,GM20}). 

We will attempt only a very brief sketch of the ideas involved. The reader is cautioned that many highly non-trivial difficulties lurk in the details. We may heuristically interpret the hypothesis as saying that $\partial g$ vanishes (up to lower order terms) on 99\% of all $(s+1)$-dimensional cubes. We then would like to argue as in our heuristic discussion above to deduce that the coefficients satisfy a generalised cocycle condition, and hence $g$ satisfies a strong derivative condition as required. 

Roughly speaking, this can be done \emph{unless} $\partial g$ vanishes on 1\% of $s$-dimensional cubes, at which point too much of the potential information gained about the coefficients is annihilated. The idea is to then show that we can upgrade this 1\% to a 99\%, so that in fact $\partial g$ vanishes on 99\% of all $s$-dimensional cubes, and we can repeat the argument from the beginning, but with $s$ reduced by $1$, and so on. We must win somewhere, since the functions at the bottom level of the hierarchy are simply constants, and therefore at some level we can exit with a strong derivatives condition as required. 

We conclude by sketching how we grow this 1\% to 99\%. The goal is to show that, assuming $\partial g=0$ on 99\% of $(s+1)$-dimensional cubes and $\partial g=0$ on 1\% of all $s$-dimensional cubes, either
\begin{enumerate}
\item $H$ has a large subgroup, or
\item $\partial g$ vanishes on 99\% of $s$-dimensional cubes.
\end{enumerate}
We assume that the first condition fails, and will attempt to transform the 1\% of cubes on which $\partial g$ vanishes to 99\% of cubes. There are two operations on cubes that are used to do so: gluing (where from $c^u$ and $c_u$ we produce $c$) and translation (where from $(x,\bvec{h})$ we produce $(x+u,\bvec{h})$). These are based on the trivial observations that:
\begin{enumerate}
\item the cocycle identity $\partial g(c)=\partial g(c^u)-\partial g(c_u)$ implies that if $\partial g$ vanishes on both $c^u$ and $c_u$ then it must also vanish on $c$ and
\item the fact that by definition
\[ \partial g(x,h_1,\ldots,h_k,u) = \partial g(x,h_1,\ldots,h_{k}) - \partial g(x+u,h_1,\ldots,h_{k})\]
implies that if $\partial g$ vanishes on both $(x,\bvec{h})$ and $(x,\bvec{h},u)$ then it also vanishes on $(x+u,\bvec{h})$.
\end{enumerate}
Since we are working inside a system of cubes with good extension properties, the second in particular implies that if $\partial g$ vanishes on $(x,\bvec{h})$ then there are many $u$ such that it vanishes for $(x+u,\bvec{h})$. We call this the translation of $(x,\bvec{h})$ by $u$.

These two observations, coupled with reasonable compatibility conditions on the system of cubes, combine to imply that the set of $s$-dimensional cubes for which $\partial g$ vanishes is closed under both gluing and translation. Methods from additive combinatorics can then be used to show that this closure property of the set of cubes in $H^{s+1}$ forces them to actually be 99\% of $H_0^{s+1}$ where $H_0\leq H$ is some subgroup. Since we are assuming that the original set of cubes is at least 1\% of all cubes, and also that there are no non-trivial large subgroups, it follows that $H_0=H$, and hence in fact $\partial g$ must vanish on 99\% of all cubes. 

We remark that this final part of the argument is, in some sense, quantitatively the most inefficient, and is the reason for the double exponential in Theorem~\ref{thmain} when $s\geq 4$. Manners suspects that a more refined argument could remove this loss, which would mean that Theorem~\ref{thmain} holds for all $s\geq 1$ with only singly exponential bounds. 

\section{Summary of the proof}\label{secsummary}

We conclude this article by giving a summary of the entire proof of Manners to reorient the reader. We are given some function $f:\bbz_N\to \bbc$ such that $\norm{f}_{U^{s+1}}\geq \delta$.
\begin{enumerate}
\item There is a set $X\subset \bbz_N^{s-2}$, which is 1\% of all tuples, such that if $\bvec{h}\in X$ then $\Delta_{\bvec{h}}f$ has large $U^2$ norm;
\item The $U^2$ inverse theorem (a simple consequence of orthogonality) implies that there is some function $\Psi: X\to \bbr$ such that each $\Delta_{\bvec{h}}f$ correlates with $e(\Psi(\bvec{h})x)$;
\item The function $\Psi$ is an approximate polynomial of degree $s$, in that $\partial \Psi$ vanishes for 1\% of cubes $c\in (\bbz_N^{s-2})^{s+2}$;
\item There is another function $\Psi'$, which agrees with $\Psi$ 1\% of the time, such that 99\% of the time the $(s+1)$-derivatives of $\Psi'$ vanish, up to lower order terms (i.e. $\Psi'$ sits at the top of a polynomial hierarchy);
\item The lower order terms in the polynomial hierarchy are linear combinations of functions with coefficients that satisfy the generalised cocycle property;
\item Functions which satisfy the generalised cocycle property arise from the derivatives of functions which are polynomials modulo $\bbz$;
\item Functions which are at the top of a polynomial hierarchy, whose coefficients satisfy a generalised cocycle property, must agree some of the time with a nilpolynomial (that can be explicitly constructed from the coefficients);
\item Thus there is a nilpolynomial $P:\bbz_N^{s-2}\to \bbr$ such that, for 1\% of tuples $\bvec{h}\in \bbz_N^{s-2}$, the multiplicative derivative $\Delta_{\bvec{h}}f$ correlates with $e(P(\bvec{h})x)$. From this via an `integration and projection' argument, an explicit nilsequence $\Phi$ of degree $s$ can be constructed, such that $f$ correlates with $\Phi$.
\end{enumerate}
\printbibliography

\end{document}

%%% Local Variables:
%%% mode: latex
%%% TeX-master: t
%%% End: